\newcommand{\CatCu}{\ensuremath{\mathrm{Cu}}}
\newcommand{\CuSgp}{$\CatCu$-sem\-i\-group}
\newcommand{\CuMor}{$\CatCu$-mor\-phism}
\newcommand{\axiomO}[1]{(O#1)}
\newcommand{\Cu}{\ensuremath{\mathrm{Cu}}}
\newcommand{\andSep}{\,\,\,\text{ and }\,\,\,}
\newcommand{\ca}{$C^*$-algebra}
\newcommand{\stHm}{${}^*$-homomorphism}
\newcommand{\AW}{$AW^*$}
\newcommand{\KK}{\mathcal{K}}
\newcommand{\NN}{\mathbb{N}}
\newcommand{\RR}{\mathbb{R}}
\newcommand{\CC}{\mathbb{C}}
\DeclareMathOperator{\QT}{QT}
\newtheorem{lma}{Lemma}[section]
\newaliascnt{thmCt}{lma}
\newtheorem{thm}[thmCt]{Theorem}
\newaliascnt{corCt}{lma}
\newtheorem{cor}[corCt]{Corollary}
\newaliascnt{prpCt}{lma}
\newtheorem{prp}[prpCt]{Proposition}
\theoremstyle{definition}
\newtheorem{pgr}[lma]{}
\newaliascnt{dfnCt}{lma}
\newtheorem{dfn}[dfnCt]{Definition}
\newaliascnt{rmkCt}{lma}
\newtheorem{rmk}[rmkCt]{Remark}
\newaliascnt{qstCt}{lma}
\newtheorem{qst}[qstCt]{Question}
\newaliascnt{exaCt}{lma}
\numberwithin{equation}{section}
\title{Edwards' condition for quasitraces on C*-algebras}
\date{\today}
\author{Ramon Antoine}
\author{Francesc Perera}
\author{Leonel Robert}
\author{Hannes Thiel}
\address{
Ramon~Antoine \& Francesc~Perera, Departament de Matem\`{a}tiques,
Universitat Aut\`{o}noma de Barcelona,
08193 Bellaterra, Barcelona, Spain}
\email[]{ramon@mat.uab.cat, perera@mat.uab.cat}
\address{
Leonel~Robert,
Department of Mathematics,
University of Louisiana at Lafayette,
Lafayette, LA 70504-1010, USA}
\email{lrobert@louisiana.edu}
\address{
Hannes~Thiel, Mathematisches Institut, Universit\"at M\"unster,
Einsteinstr.~62, 48149 M\"unster, Germany}
\email[]{hannes.thiel@uni-muenster.de}
\begin{document}

\begin{abstract}
We prove that Cuntz semigroups of C*-algebras satisfy Edwards' condition with respect to every quasitrace.
This condition is a key ingredient in the study of the realization problem of functions on the cone of quasitraces as ranks of positive elements.
In the course of our investigation, we identify additional structure of the Cuntz semigroup of an arbitrary C*-algebra and of the cone of quasitraces.
\end{abstract}

\thanks{
The two first named authors were partially supported by MINECO (grant No.\  MTM2017-83487-P), and by the Comissionat per Universitats i Recerca de la Generalitat de Catalunya (grant No.\ 2017-SGR-1725).
The fourth named author was partially supported by the Deutsche Forschungsgemeinschaft (DFG, German Research Foundation) under the SFB 878 (Groups, Geometry \& Actions) and under Germany's Excellence Strategy EXC 2044-390685587 (Mathematics M\"{u}nster: Dynamics-Geometry-Structure).
}

\maketitle

\section{Introduction}

The rank of a positive element $a$ in a C*-algebra $A$ with respect to a trace $\tau$ (or, more generally, a quasitrace) is defined as $d_\tau(a)=\lim_n \tau(a^{1/n})$.
In case of a trace, this rank is nothing but the value of the support projection of $a$ in $A^{**}$ under the canonical extension of $\tau$ to a normal trace on $A^{**}$;
see \cite{OrtRorThi11CuOpenProj}.

If $A$ is unital and stably finite, then the set $\QT_1(A)$ of normalized quasitraces is a nonempty Choquet simplex.
Given an extreme quasitrace $\tau$ in $\QT_1(A)$, it was shown in \cite[Theorem~4.7]{Thi17arX:RksOps} that for any two positive elements $a$ and $b$ in $A$, 
the minimum of the ranks of $a$ and $b$ with respect to $\tau$ can be approximated by the ranks of positive elements $c$ that are dominated by $a$ and $b$ in the sense of Cuntz:
\[
\min\big\{ d_\tau(a), d_\tau(b) \big\}
= \sup\big\{ d_\tau(c) : c\precsim a,b \big\}.
\]
This property was termed \emph{Edwards' condition for $\tau$} by the fourth named author due to its relation with the work in \cite{Edw69UniformApproxAff}.
This paper concerns the extension of Edwards' condition to all quasitraces (not necessarily extremal) defined on a general (not necessarily unital) C*-algebra.

Edwards' condition for extremal, normalized quasitraces was a crucial ingredient in  \cite{Thi17arX:RksOps} for the solution of the rank problem for unital, simple C*-algebras of stable rank one.
In the same spirit, the general Edwards' condition as developed in this paper is a crucial ingredient in \cite{AntPerRobThi18arX:CuntzSR1} for the solution of the rank problem for general C*-algebras of stable rank one.

The \emph{rank problem} for a C*-algebra $A$ is to determine which functions on the topological cone $\QT(A)$ of quasitraces on $A$ arise as the ranks of positive operators in $A$.
Here, the \emph{rank} of $a$ in $A_+$ is the function that associates to each quasitrace $\tau$ the rank of $a$ with respect to $\tau$; 
see \cite{DadTom10Ranks}, \cite{Thi17arX:RksOps}, \cite{AntPerRobThi18arX:CuntzSR1}. 
The rank problem for $A$ is closely connected to the question of whether the set of ranks of elements in $A_+$ is closed under infima, that is, if $f,g\colon\QT(A)\to[0,\infty]$ are realized as the ranks of positive elements in $A$, is the same true for $f\wedge g$?
Loosely speaking, Edward's condition is the requirement that this can at least be done pointwise, that is, \emph{the infimum of the ranks of two positive elements $a$ and $b$ can be pointwise approximated by the ranks of elements dominated by $a$ and $b$};
see \autoref{dfn:Edwards}.

The Cuntz semigroup of a C*-algebra as introduced in \cite{CowEllIva08CuInv} satisfies a series of properties (see below for details) denoted \axiomO{1}-\axiomO{6}.
In \autoref{sec:Cu}, we show that Cuntz semigroups satisfy a new property, which we call \axiomO{7};
see \autoref{dfn:O7} and \autoref{prp:O7}.
This property allows us to deal naturally with ideals in the semigroup.
In particular, it allows us to obtain infima of elements in the Cuntz semigroup under the assumption that one of the elements is idempotent.
Note that idempotents in the Cuntz semigroup of a separable C*-algebra are in natural correspondence with the lattice of closed, two-sided ideals.

Given a C*-algebra $A$, the Cuntz semigroup $\Cu(A)$ appears naturally in the study of quasitraces on $A$.
Building on results from \cite{Cun78DimFct} and \cite{BlaHan82DimFct}, it was shown in \cite{EllRobSan11Cone} that the cone $\QT(A)$ of $[0,\infty]$-valued $2$-quasitraces on $A$ is homeomorphic to the cone $F(\Cu(A))$ of functionals on the Cuntz semigroup of $A$.
Therefore, ranks are naturally viewed as elements in the dual of this cone (or in the second dual of the semigroup), and hence their properties may be obtained from the study of structural properties of both $\Cu(A)$ and its cone of functionals.
This will be done, respectively, in Sections~2 and~3.

In \autoref{sec:cones} we study compact cones and their duals.
We apply our results to cones of functionals of Cuntz semigroups, continuing the work in \cite{EllRobSan11Cone} and \cite{Rob13Cone}.
In particular, we prove that the cone of quasitraces on a C*-algebra $A$ satisfies Riesz refinement, \autoref{prp:coneSemilatticeOrdered}, a result which is significant towards establishing Edwards' condition for Cuntz semigroups of C*-algebras.

\autoref{sec:Edwards} exclusively concerns Edwards' condition in an abstract setting.
As noted above, ranks arise as elements in the dual of a suitable monoid satisfying Riesz refinement.
We use the Riesz-Kantorovich type description of infima in this setting (see \eqref{prp:MdualInj:eqInfDual}) to generalize Edwards's condition for arbitrary functionals.
The main result of the section states that, for semigroups satisfying \axiomO{7}, one can verify Edwards' condition on functionals taking finite values.

Finally, in \autoref{sec:ca} we show that Cuntz semigroups of \ca{s} satisfy Edwards' condition;
see \autoref{prp:CuA-Edwards}.
Our method of proof follows the line of attack developed by the fourth author in \cite{Thi17arX:RksOps} combined with the results obtained in the previous sections.

\section{Properties of Cuntz semigroups}
\label{sec:Cu}

Let $A$ be a C*-algebra.
In \cite{Cun78DimFct}, Cuntz introduced the following relations for positive elements $a,b\in A$:
$a\precsim b$ if there is a sequence $(x_n)_n$ in $A$ such that $\lim_n \| a - x_nbx_n^*\|=0$;
$a\sim b$ provided that $a\precsim b$ and $b\precsim a$.

The Cuntz semigroup of $A$ is defined as $\Cu(A):=(A\otimes \mathcal K)_+/\!\sim$, where $\mathcal{K}$ denotes the algebra of compact operators on $\ell^2(\NN)$.
We denote the class of a positive element $a\in A\otimes \mathcal K$ by $[a]$.
Then $\Cu(A)$ becomes an ordered abelian semigroup with order induced by the subequivalence $\precsim$ and addition induced by $[a]+[b]=[\left(\smallmatrix a & 0 \\ 0 & b\endsmallmatrix\right)]$.

\subsection{Properties \axiomO{1}-\axiomO{6}}

The Cuntz semigroup of a C*-algebra is known to satisfy a number of order properties, which we now briefly recall.
The reader is referred to \cite{CowEllIva08CuInv} and \cite{AntPerThi18TensorProdCu} for background on the Cuntz semigroups of C*-algebras and their abstract counterparts, \CuSgp{s}.

Let $S$ be an ordered set such that every increasing sequence has a supremum.
Given $x,y\in S$ we say that $x$ is way-below $y$ if whenever $y\leq \sup_n y_n$ for some increasing sequence $(y_n)_n$, then there exists $n_0\in\NN$ such that $x\leq y_{n_0}$.
We denote this relation by $x\ll y$.

Suppose now that $S$ is a positively ordered monoid, that is, an ordered monoid such that $0\leq x$ for all $x\in S$.
Consider the following properties on $S$:
\begin{enumerate}
\item[\axiomO{1}]
Every increasing sequence in $S$ has a supremum.
\item[\axiomO{2}]
For each $x\in S$ there exists an $\ll$-increasing sequence $(x_n)_n$ such that $x=\sup_n x_n$.
\item[\axiomO{3}]
If $x_1\ll y_1$ and $x_2\ll y_2$ then $x_1+x_2\ll y_1+y_2$.
\item[\axiomO{4}]
If $(x_n)_n$ and $(y_n)_n$ are increasing sequences then $\sup_n (x_n+y_n)=\sup_n x_n+\sup_n y_n$
\end{enumerate}	
By a \emph{\CuSgp} we understand a positively ordered monoid satisfying \axiomO{1}-\axiomO{4}.
A map between \CuSgp{s} is called a \emph{\CuMor} if it is a monoid homomorphism that preserves order, suprema of increasing sequences, and the way-below relation.
It was shown in \cite{CowEllIva08CuInv} that the Cuntz semigroup of a C*-algebra is a \CuSgp, and that a \stHm{} $A\to B$ naturally induces a \CuMor{} $\Cu(A)\to\Cu(B)$.

The Cuntz semigroup of a C*-algebra also satisfies the following two properties:
\begin{enumerate}
\item[\axiomO{5}]
For all $x'\ll x\leq y$ and $w'\ll w$ such that $x+w\leq y$ there exists $z$ such that $x'+z\leq y\leq x+z$ and $w'\ll z$.
\item[\axiomO{6}]
For all $x'\ll x\leq y+z$ there exist $y',z'$ such that $x'\leq y'+z'$, $y'\leq x,y$ and $z'\leq x,z$.
\end{enumerate}
That Cuntz semigroups of C*-algebras satisfy \axiomO{5} was proved in \cite[Proposition~4.6, p.34]{AntPerThi18TensorProdCu}.
We will often use a weaker version of \axiomO{5} that first appeared in \cite{RorWin10ZRevisited}:
For all $x'\ll x\leq y$ there exists $z$ such that $x'+z\leq y\leq x+z$.
It was shown in \cite{Rob13Cone} that Cuntz semigroups of C*-algebras satisfy \axiomO{6}.

\subsection{Property \axiomO{7}}

We identify a new property that Cuntz semigroups of C*-algebras satisfy. 

\begin{dfn}
\label{dfn:O7}
A \CuSgp{} $S$ is said to satisfy \axiomO{7} if for all $x_1',x_1,x_2',x_2,w\in S$ satisfying
\[
x_1'\ll x_1\leq w \andSep x_2'\ll x_2\leq w,
\]
there exists $x\in S$ such that $x_1',x_2'\ll x\leq w,x_1+x_2$.
\end{dfn}

\begin{prp}
\label{prp:O7}
The Cuntz semigroup of every C*-algebra satisfies \axiomO{7}.
\end{prp}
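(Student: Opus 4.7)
The plan is to realize $x$ concretely in the C*-algebra by pulling both $x_1$ and $x_2$ inside the hereditary subalgebra generated by a representative of $w$, and then adding the resulting elements.

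First, I would use \axiomO{2} to interpolate and find $x_i''$ with $x_i'\ll x_i''\ll x_i$ for $i=1,2$; this gives room to upgrade the conclusion $x_i'\leq x$ to $x_i'\ll x$ at the end. Choose positive elements $a_1,a_2,b\in A\otimes \KK$ representing $x_1,x_2,w$. Since $x_i=\sup_n [(a_i-1/n)_+]$ and $x_i''\ll x_i$, pick $\varepsilon_i>0$ with $x_i''\leq [(a_i-\varepsilon_i)_+]$. Using $a_i\precsim b$ and the standard Rørdam lemma, choose $d_i\in A\otimes \KK$ with
\[
(a_i-\varepsilon_i)_+ = d_i\, b\, d_i^*.
\]

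Next, set $c_i := b^{1/2}d_i^*d_i b^{1/2}$. Since $(d_ib^{1/2})(d_ib^{1/2})^* = d_ibd_i^*$ and $(d_ib^{1/2})^*(d_ib^{1/2}) = c_i$, these two positives have the same spectrum, so $c_i\sim (a_i-\varepsilon_i)_+$. Crucially, $c_i$ now lies in $\overline{b(A\otimes\KK)b}$ and satisfies $c_i\leq \|d_i\|^2 b$. I then propose the candidate $x=[c]$ where $c := c_1+c_2 \in \overline{b(A\otimes \KK)b}_+$.

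Finally, I verify the three required inequalities. For $[c]\leq w$: $c\leq (\|d_1\|^2+\|d_2\|^2)b$, hence $c\precsim b$. For $[c]\leq x_1+x_2$: since $c_1+c_2$ arises as a compression of $c_1\oplus c_2$ (via the row vector $(1,1)$), one has $c_1+c_2\precsim c_1\oplus c_2 \sim (a_1-\varepsilon_1)_+\oplus (a_2-\varepsilon_2)_+\precsim a_1\oplus a_2$, so $[c]\leq x_1+x_2$. For $x_i'\ll [c]$: $c\geq c_i$, so $[c]\geq [c_i]=[(a_i-\varepsilon_i)_+]\geq x_i''\gg x_i'$.

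I do not expect a serious obstacle here: the construction is essentially the Rørdam--Winter trick of replacing subequivalent pieces by Cuntz-equivalent pieces inside the hereditary subalgebra of the majorant $b$, after which a single addition solves the problem. The one point to be careful about is the direction $c_1+c_2\precsim c_1\oplus c_2$ (this is where the inequality $[c]\leq x_1+x_2$ is gained, not $\leq \max$), and the bookkeeping of the interpolation $x_i'\ll x_i''\ll x_i$ which converts $[c]\geq x_i''$ into $x_i'\ll [c]$.
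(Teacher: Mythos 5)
Your proof is correct and follows essentially the same route as the paper: cut the representatives of $x_1,x_2$ down by $\varepsilon$, replace the cut-down elements by Cuntz-equivalent positive elements inside the hereditary subalgebra generated by a representative of $w$ (you make this explicit via R{\o}rdam's lemma and $xx^*\sim x^*x$, where the paper simply invokes the existence of such elements), and take the class of their sum, using $[c_1+c_2]\le[c_1]+[c_2]$ for the upper bound by $x_1+x_2$. The extra interpolation $x_i'\ll x_i''\ll x_i$ is a harmless (indeed slightly more careful) way of securing the conclusion $x_i'\ll x$.
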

\begin{proof}
Let $A$ be a C*-algebra. Let $x_i$, $x_i'$, $w\in \Cu(A)$ satisfy $x_i'\ll x_i\leq w$ for $i=1,2$. Choose positive elements $b_1,b_2,a\in A\otimes\mathcal K$ such that $x_1=[b_1]$, $x_2=[b_2]$, and $w=[a]$.

Since  $x_1'\ll [b_1]$ and $x_2'\ll [b_2]$, we may choose $\epsilon>0$ such that 
$x_i'\leq [(b_i-\epsilon)_+]$ for $i=1,2$. Since $[b_i]\leq [a]$, there are positive elements $c_1,c_2\in \overline{a(A\otimes\mathcal K)a}$ such that $(b_i-\epsilon)_+\sim c_i$ for $i=1,2$.

Set $x=[c_1+c_2]$.
Then 
\[
x_i'\ll [(b_i-\epsilon)_+]=[c_i]\leq [c_1+c_2]=x\hbox{ for }i=1,2.
\]
Also, since $c_1+c_2$ belongs to $\overline{a(A\otimes\mathcal K)a}$, we have $x\leq [a]=w$.
Using at the first step that the Cuntz class of the sum of two positive elements is always dominated by the sum of their Cuntz classes (\cite[Lemma~2.10]{AraPerTom11Cu}), we obtain
\[
x\leq [c_1]+[c_2]=[(b_1-\epsilon)_+]+[(b_2-\epsilon)_+]\leq x_1+x_2,
\]
as desired.
\end{proof}

An \emph{ideal} of a \CuSgp{} is a downward hereditary subsemigroup closed under suprema of increasing sequences.
(See \cite[Section~5.1, p.37ff]{AntPerThi18TensorProdCu} for more details.)
The relevance of \axiomO{7} when dealing with ideals of a \CuSgp{} is demonstrated in the following result.

\begin{prp}
\label{prp:infWithIdealDirected}
Let $S$ be a \CuSgp{} satisfying \axiomO{7}, let $w\in S$, and let $J\subseteq S$ be an ideal.
Then the set $\{x\in S : x\in J, x\leq w \}$ is upward directed.
\end{prp}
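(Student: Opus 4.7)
The plan is to show that $\{x \in S : x \in J, x \leq w\}$ is upward directed by producing, for any $u, v$ in this set, some $x$ in the set with $u, v \leq x$. Using \axiomO{2}, write $u = \sup_n u_n$ and $v = \sup_n v_n$ with $u_n \ll u_{n+1}$ and $v_n \ll v_{n+1}$. I will build an increasing sequence $(e_n)$ in $S$, together with companions $(\hat e_n)$, satisfying the invariants
\[
u_n, v_n \leq e_n \leq e_{n+1}, \quad e_n \ll \hat e_n \leq w, \andSep \hat e_n \in J.
\]
Once this is achieved, $x := \sup_n e_n$ lies in $J$ (ideals are closed under suprema of increasing sequences), satisfies $x \leq w$ termwise, and dominates both $u$ and $v$, giving the required element. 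The companions $\hat e_n$ are carried along to supply the way-below hypothesis needed to reapply \axiomO{7} at the next step.

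For the base case $n = 1$, apply \axiomO{7} to $u_1 \ll u_2 \leq w$ and $v_1 \ll v_2 \leq w$ to obtain $\hat e_1$ with $u_1, v_1 \ll \hat e_1 \leq w$ and $\hat e_1 \leq u_2 + v_2 \leq u + v$; since $u + v \in J$ and $J$ is downward hereditary, $\hat e_1 \in J$, and \axiomO{2} then yields $e_1 \ll \hat e_1$ with $e_1 \geq u_1, v_1$. For the inductive step, since \axiomO{7} combines only two elements at a time, I will use it twice: first apply it to $e_n \ll \hat e_n \leq w$ and $u_{n+1} \ll u_{n+2} \leq w$, producing $h$ with $e_n, u_{n+1} \ll h \leq w$ and $h \leq \hat e_n + u_{n+2}$; refine via \axiomO{2} to $h' \ll h$ with $e_n, u_{n+1} \leq h'$. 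Then apply \axiomO{7} to $h' \ll h \leq w$ and $v_{n+1} \ll v_{n+2} \leq w$ to obtain $\hat e_{n+1}$ with $h', v_{n+1} \ll \hat e_{n+1} \leq w$ and $\hat e_{n+1} \leq h + v_{n+2}$; one more refinement with \axiomO{2} gives $e_{n+1} \ll \hat e_{n+1}$ with $e_{n+1} \geq h' \geq e_n$ and $e_{n+1} \geq u_{n+1}, v_{n+1}$.

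The main hurdle is keeping every auxiliary element inside the ideal $J$; this is precisely where the second upper bound $x_1 + x_2$ in the conclusion of \axiomO{7} is indispensable. The outputs of the two applications of \axiomO{7} are controlled by $\hat e_n + u_{n+2}$ and $h + v_{n+2}$, each of which lies in $J$ because $J$ is a sub-semigroup containing $\hat e_n$ and the $u_i, v_i$ (the latter dominated by $u + v \in J$). Downward heredity then places all of $h, h', \hat e_{n+1}, e_{n+1}$ in $J$, so the invariants are maintained through the induction and $x := \sup_n e_n$ is the required element.
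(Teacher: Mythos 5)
Your proof is correct and uses the same key mechanism as the paper: apply \axiomO{7} to two way-below pairs under $w$, and use the second upper bound $x_1+x_2$ together with downward heredity to keep the resulting element inside $J$. The only difference is organizational — the paper reduces directedness of $\{x\in J : x\leq w\}$ to directedness of its set of way-below approximants by citing an external lemma and then applies \axiomO{7} once, whereas you carry out that reduction by hand via an explicit interleaved induction; your version is longer but self-contained.
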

\begin{proof}
Notice that $\{x\in S : x\leq w, x\in J\}$ is a downward hereditary subset closed under suprema of increasing sequences.
Thus, by \cite[Lemma~3.2]{AntPerRobThi18arX:CuntzSR1}, it suffices to show that
\[
\big\{ x'\in S : \text{there exists }x\in J\text{ such that }x'\ll x,\, x\leq w \big\}
\]
is upward directed.
To this end, suppose that $x'$, $y'\in S$ satisfy that $x'\ll x$, $y'\ll y$ for some elements $x$, $y\in J$ such that  $x,y\leq w$. We deduce by \axiomO{7} that there exists $z\in S$ such that $x',y'\ll z\leq w,x+y$. Since $z\leq x+y$, and since $J$ is an ideal, we have $z\in J$. Choose $z'\in S$ with $z'\ll z$, and such that $x'\leq z'$ and $y'\leq z'$. Then $z'$ is in the set displayed above and, being an upper bound for both $x'$ and $y'$, this shows that this set is upward directed, as desired.
\end{proof}

A \CuSgp{} is called \emph{countably based} if it contains a countable subset such that
every element is the supremum of a $\ll$-increasing sequence with terms in the said countable subset.
It is a standard result that in a countably based \CuSgp{} every directed subset admits a supremum;
see \cite[Remarks~3.1.3, p.21f]{AntPerThi18TensorProdCu}.
Cuntz semigroups of separable C*-algebras are countably based (see, for example, \cite[Lemma 1.3]{AntPerSan11:Pullbacks}).

Let $J$ be an ideal of a countably based \CuSgp{} $S$.
Since ideals of \CuSgp{s} are upward directed, $J$ has a largest element $w_J:=\sup J$ (see also \cite[Paragraph~5.1.6, p.39f]{AntPerThi18TensorProdCu}).
Further, this element is idempotent, that is, $2w_J=w_J$.
Conversely, given an idempotent $w\in S$, the order ideal generated by $w$ is an ideal of $S$ with supremum $w$.
In light of this correspondence, \autoref{prp:infWithIdealDirected} immediately implies the following result. 

\begin{thm}
\label{prp:infWithIdeal}
Let $S$ be a countably based \CuSgp{} satisfying \axiomO{7}.
Then each $x\in S$ and each idempotent element $w\in S$ have an infimum $x\wedge w$ in $S$.
\end{thm}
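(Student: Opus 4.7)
The plan is to combine the correspondence between idempotents and ideals (explained in the paragraph preceding the theorem) with the directedness result \autoref{prp:infWithIdealDirected} and the fact that in a countably based \CuSgp{} directed subsets admit suprema.

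More precisely, let $x\in S$ and let $w\in S$ be idempotent. As noted in the text, the order ideal $J_w$ generated by $w$ is an ideal of $S$ with supremum $w$. Because $w$ is idempotent and $J_w$ is closed under addition and downwards hereditary, one actually has $J_w = \{v\in S : v\leq w\}$: any $v\leq w$ lies in $J_w$ by downward hereditariness, and any sum of two such elements remains $\leq 2w=w$. I would record this identification explicitly at the start of the argument, since it is what links ``$y\in J_w$'' to ``$y\leq w$''.

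Now apply \autoref{prp:infWithIdealDirected} with the ideal $J_w$ and with $x$ playing the role of the bounding element. This yields that
\[
D := \{y\in S : y\in J_w,\ y\leq x\} = \{y\in S : y\leq w,\ y\leq x\}
\]
is upward directed. Since $S$ is countably based, every directed subset of $S$ has a supremum (the standard reduction to increasing sequences with terms in a countable basis, as recalled just before the theorem), so $z:=\sup D$ exists in $S$.

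It remains to verify that $z$ is the desired infimum. The bounds $z\leq x$ and $z\leq w$ follow because $x$ and $w$ are each upper bounds for $D$ (every $y\in D$ satisfies $y\leq x$ and $y\leq w$). For the universal property, suppose $v\in S$ satisfies $v\leq x$ and $v\leq w$; then $v\in J_w$ (using $J_w=\{u:u\leq w\}$) and $v\leq x$, so $v\in D$, whence $v\leq z$. Thus $z=x\wedge w$.

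There is no real obstacle here: all the work was done in \autoref{prp:infWithIdealDirected}. The only point requiring a line of care is the identification $J_w=\{v:v\leq w\}$ for an idempotent $w$, which ensures that ``membership in the ideal'' and ``being dominated by $w$'' coincide, so that the directed set produced by \autoref{prp:infWithIdealDirected} is genuinely the set of common lower bounds of $x$ and $w$.
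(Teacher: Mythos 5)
Your argument is correct and is exactly the route the paper intends: it deduces the theorem from \autoref{prp:infWithIdealDirected} via the correspondence between idempotents and ideals, using countable basedness to obtain the supremum of the resulting directed set. Your explicit verification that $J_w=\{v\in S: v\leq w\}$ and that the supremum of the directed set is the infimum is exactly the routine checking the paper leaves to the reader when it says the proposition ``immediately implies'' the theorem.
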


Moreover, these infima with idempotent elements are well behaved as the following results illustrates.

\begin{thm}
\label{prp:infIdealGenCu}
Let $S$ be a countably based \CuSgp{} satisfying \axiomO{5}-\axiomO{7}.
Let $w\in S$ be an idempotent element.
Then the following are satisfied:
\begin{enumerate}[{\rm (i)}]
\item
The map $S\to S$ given by $x\mapsto x\wedge w$ is a monoid homomorphism preserving the order and the suprema of increasing sequences.
\item
Given $x,y\in S$, we have $x\leq y+w$ if and only if $x+(y\wedge w) \leq y+(x\wedge w)$. 
\item
We have 
\[
x\wedge w_1+x\wedge w_2 = x\wedge(w_1\wedge  w_2) + x\wedge(w_1+w_2)
\]
for all $x\in S$ and idempotents $w_1,w_2\in S$.
\end{enumerate}
\end{thm}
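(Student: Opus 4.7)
The plan is to prove (i) directly from \axiomO{2} and \axiomO{6}, reduce (ii) to a clean application of weak \axiomO{5} together with \axiomO{6}, and then derive (iii) formally from two applications of (ii) with $w_1$ as the idempotent.

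For (i), monotonicity and the normalization $0\wedge w=0$ are immediate from the defining property of the infimum. For the additivity $(x_1+x_2)\wedge w=(x_1\wedge w)+(x_2\wedge w)$, the bound $\geq$ uses $2w=w$ to see that the right-hand side is a lower bound of $x_1+x_2$ and of $w$. For $\leq$, I would take $y\ll(x_1+x_2)\wedge w$, interpolate $y\ll y'\ll(x_1+x_2)\wedge w$ by \axiomO{2}, and apply \axiomO{6} to $y\ll y'\leq x_1+x_2$ to obtain $y\leq y_1+y_2$ with $y_i\leq y',x_i$; since $y'\leq w$, each $y_i\leq x_i\wedge w$. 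Preservation of suprema of increasing sequences is handled analogously: any $y\ll(\sup_n x_n)\wedge w$ satisfies $y\leq w$ and $y\ll\sup_n x_n$, so $y\leq x_n\wedge w$ for some $n$, and an \axiomO{2}-approximation concludes.

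The core step is (ii)$\Rightarrow$; the reverse implication is just $x\leq x+(y\wedge w)\leq y+(x\wedge w)\leq y+w$. For the forward direction, fix $u\ll y\wedge w$. Applying weak \axiomO{5} to $u\ll y\wedge w\leq y$ furnishes $z\in S$ with $u+z\leq y\leq(y\wedge w)+z$. Idempotency of $w$ combined with $y\wedge w\leq w$ yields $(y\wedge w)+w=w$; thus adding $w$ to the upper estimate gives $y+w\leq z+w$, and so $x\leq z+w$. An \axiomO{6}-plus-\axiomO{2} refinement of $x\leq z+w$ then produces $x\leq z+(x\wedge w)$. Combining,
\[
x+u\leq z+u+(x\wedge w)\leq y+(x\wedge w),
\]
and taking the supremum over $u\ll y\wedge w$ via \axiomO{2} and \axiomO{4} finishes (ii).

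Part (iii) is a formal consequence of (ii) applied twice, each time with idempotent $w_1$, exploiting the identities $(x\wedge w_2)\wedge w_1=x\wedge(w_1\wedge w_2)$ and $(x\wedge(w_1+w_2))\wedge w_1=x\wedge w_1$. For the $\leq$ inequality of (iii), I would substitute $x\mapsto x\wedge w_2$ and $y\mapsto x\wedge(w_1+w_2)$ in (ii); the hypothesis $x\wedge w_2\leq(x\wedge(w_1+w_2))+w_1$ is immediate from $x\wedge w_2\leq x\wedge(w_1+w_2)$. For $\geq$, substitute $x\mapsto x\wedge(w_1+w_2)$ and $y\mapsto x\wedge w_2$; the hypothesis $x\wedge(w_1+w_2)\leq(x\wedge w_2)+w_1$ is established by an \axiomO{6} decomposition of any $u\ll x\wedge(w_1+w_2)$, absorbing the $w_2$-part (which lies below both $x$ and $w_2$, hence below $x\wedge w_2$) and leaving the remaining piece under $w_1$. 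The main obstacle is (ii)$\Rightarrow$: making the weak \axiomO{5} splitting of $y$ cooperate with the \axiomO{6} refinement of $x$ so that the $w$-piece of $x$ is actually captured by $x\wedge w$; once this is secured, (iii) follows by pure bookkeeping.
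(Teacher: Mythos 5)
Your proof is correct, and for parts (i) and (ii) it is essentially the argument in the paper: additivity of $x\mapsto x\wedge w$ via \axiomO{6} applied to $z'\ll(x_1+x_2)\wedge w\leq x_1+x_2$, and the forward direction of (ii) via weak \axiomO{5} on $y'\ll y\wedge w\leq y$ followed by an \axiomO{6} refinement of $x\leq z+w$ into $x\leq z+(x\wedge w)$. The only genuine divergence is in (iii). The paper first proves the auxiliary identity $x\wedge(w_1+w_2)+w_1=x\wedge w_2+w_1$ (one application of (ii), with the hypothesis $x\wedge(w_1+w_2)\leq w_1+w_2$ being trivially true) and then invokes the cancellation-type consequence of (ii) that $y+w_1=z+w_1$ implies $y+(z\wedge w_1)=z+(y\wedge w_1)$. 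You instead apply (ii) twice directly, once in each direction of the substitution $\{x\wedge w_2,\,x\wedge(w_1+w_2)\}$, which gives the two inequalities of (iii) symmetrically; the price is that one of your two hypotheses, $x\wedge(w_1+w_2)\leq(x\wedge w_2)+w_1$, is not automatic and needs the extra \axiomO{6} decomposition you describe (which does work: for $u\ll x\wedge(w_1+w_2)\leq w_1+w_2$, \axiomO{6} gives $u\leq y'+z'$ with $y'\leq w_1$ and $z'\leq x\wedge(w_1+w_2),w_2\leq x\wedge w_2$). Both routes rest on the same lattice identities $(x\wedge w_2)\wedge w_1=x\wedge(w_1\wedge w_2)$ and $(x\wedge(w_1+w_2))\wedge w_1=x\wedge w_1$; yours trades the paper's cancellation lemma for one more \axiomO{6} step, and is arguably more transparent about where each inequality of (iii) comes from.
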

\begin{proof}
(i):
Define $\varrho_w\colon S\to S$ by $\varrho_w(x):=x\wedge w$.
It is obvious that $\varrho_w$ is order preserving.
To prove additivity, let $x,y\in S$.
Since $x\wedge w+y\wedge w\leq x+y$ and $x\wedge w+y\wedge w\leq 2w=w$, we have 
\[
\varrho_w(x)+\varrho_w(y)
= x\wedge w+y\wedge w
\leq (x+y)\wedge w
= \varrho_w(x+y).
\]
To show the converse inequality, set $z=(x+y)\wedge w$, and let $z'\ll z$.
Apply \axiomO{6} for $z'\ll z\leq x+y$ to obtain $x',y'\in S$ satisfying
\[
z' \leq x'+y',\quad
x'\leq x,z, \andSep 
y'\leq y,z.
\]
Since $x'\leq z\leq w$ and $x'\leq x$, we have $x'\leq x\wedge w$.
Analogously, we deduce that $y'\leq y\wedge w$.
Hence, $z'\leq x\wedge w+y\wedge w$.
Since this holds for all $z'\ll z$, we obtain 
\[
\varrho_w(x+y)
= z\leq x\wedge w +y\wedge w
= \varrho_w(x)+\varrho_w(y).
\] 

Finally, let us show that $\varrho_w$ preserves sequential suprema.
Let $(x_n)_{n=1}^\infty$ be an increasing sequence in $S$.
The inequality 
\[
\sup_n \varrho_w(x_n)
= \sup_n (x_n\wedge w)
\leq (\sup_n x_n)\wedge w
= \varrho_w(\sup_n x_n)
\]
is clear.
Set $z=(\sup_n x_n)\wedge w$ and let $z'\ll z$.
Since $z\leq \sup_n x_n$, there exists $n$ such that $z'\leq x_n$.
Also, $z'\leq z\leq w$. 
Therefore
\[
z'\leq x_n\wedge w\leq \sup_n (x_n\wedge w).
\]
Since this holds for all $z'\ll z$, we obtain
\[
\varrho_w(\sup_n x_n)= z\leq \sup_n (x_n\wedge w)=\sup\varrho_w(x_n).
\]

(ii):
Let $x,y\in S$.
If $x+(y\wedge w) \leq y+(x\wedge w)$, then 
\[
x \leq x+(y\wedge w)
\leq y+(x\wedge w)
\leq y+w.
\]

To show the converse implication, assume that $x\leq y+w$, and let $y'\ll y\wedge w$.
By \axiomO{5}, we can choose $z$ such that $y'+z\leq y\leq y\wedge w+z$.
Then $x\leq y\wedge w+z+w=z+w$. Let $x'\ll x$.
By \axiomO{6}, $x'\leq z+x\wedge w$.
Adding $y'$ on both sides we get $x'+y'\leq y+x\wedge w$.
Passing to the supremum over all $x'\ll x$ and $y'\ll y\wedge w$, the result follows.

(iii):
Let $x\in S$ and let $w_1,w_2\in S$ be idempotents.
By (i), $w_1\wedge w_2$ is also an idempotent.
It thus makes sense to write $x\wedge(w_1\wedge w_2)$ and this agrees with $(x\wedge w_1)\wedge w_2$.
We first show that
\[
x\wedge(w_1+w_2)+w_1 = x\wedge w_2 + w_1.
\]
The inequality `$\geq$' is clear.
On the other hand, applying (ii) to $x\wedge (w_1+w_2)\leq w_1+w_2$ at the first step yields
\[ 
x\wedge(w_1+w_2)+w_1\wedge w_2\leq w_1+x\wedge (w_1+w_2)\wedge w_2 =w_1+x\wedge w_2.  
\]
Adding $w_1$ to the previous inequality, we obtain the desired reverse inequality
\[ 
x\wedge(w_1+w_2)+w_1 \leq x\wedge w_2 + w_1.
\]

Given $y,z\in S$ satisfying $y+w_1 = z+w_1$, it follows from (ii) that $y+z\wedge w_1=z+y\wedge w_1$.
Applying this for $y=x\wedge(w_1+w_2)$ and $z=x\wedge w_2$, we get
\[
x\wedge(w_1+w_2)+(x\wedge w_2)\wedge w_1=x\wedge w_2+(x\wedge (w_1+w_2))\wedge w_1,
\]
which implies the desired equality.
\end{proof}

\begin{rmk}
Let $A$ be a \ca{}, and let $J$ be a $\sigma$-unital, closed, two-sided ideal.
Then $\Cu(A)$ is a \CuSgp{} satisfying \axiomO{5}-\axiomO{7}.
We identify $\Cu(J)$ with the ideal $\{[a]\in\Cu(A) : a\in (J\otimes\KK)_+\}$ of $\Cu(A)$.
Since $J$ is $\sigma$-unital, there exists a largest element in $\Cu(J)$, denoted $w_J$.

Recall that $\Cu(A)$ can be identified with certain equivalence classes of countably generated, right Hilbert C*-modules over $A$;
see \cite{CowEllIva08CuInv}, see also \cite{AraPerTom11Cu}.
If $M$ is a countably generated, right Hilbert C*-module over $A$, then $MJ$ is a countably generated, right Hilbert C*-module over $J$, and $[MJ]$ - the class of $MJ$ in $\Cu(J)$ - depends only on the class of $M$, which is the justification to denote $[MJ]$ by $[M]J$;
see \cite{CiuRobSan10CuIdealsQuot}.
One can show that
\[
[M]\wedge w_J = [M]J
\]
in $\Cu(A)$.
Hence, \autoref{prp:infIdealGenCu}(i) and~(ii) generalize (and recover) Proposition~4.3 and Theorem~1.1 in \cite{CiuRobSan10CuIdealsQuot} in the case that $A$ is a separable C*-algebra.
\end{rmk}

If $S$ is the Cuntz semigroup of a (not necessarily separable) C*-algebra, then the results in \cite{CiuRobSan10CuIdealsQuot} show that the infimum of any $x\in S$ and any idempotent $w\in S$ exist.
Thus, for Cuntz semigroups of C*-algebras, \autoref{prp:infWithIdeal} holds without the assumption of countable generation.
It seems unclear if the same holds for \CuSgp{s}:

\begin{qst}
Let $S$ be a \CuSgp{} satisfying \axiomO{7}.
Do each $x\in S$ and each idempotent $w\in S$ admit an infimum in $S$?
What, if we additionally assume that $S$ satisfies \axiomO{5} and \axiomO{6}?
\end{qst}

\section{Cones and their duals}
\label{sec:cones}

Here we establish a number of results on algebraically ordered, compact cones and their duals.
We then apply these results to our main object of study: the cone~$F(S)$ of functionals on a \CuSgp{} $S$. 

\subsection{Algebraically ordered compact cones}

Recall that a \emph{cone} is a commutative monoid~$C$ together with a scalar multiplication by $(0,\infty)$. More specifically, the scalar multiplication is a map $(0,\infty)\times C\to C$, denoted $(t,a)\mapsto ta$, that is additive in each variable, and such that $(st)a=s(ta)$ and $1a=a$, for all $s,t\in(0,\infty)$ and $a\in C$.
Note that we do not define scalar multiplication by $0$.
A \emph{topological cone} is a cone together with a topology such that addition and scalar multiplication are jointly continuous.
(Here we equip $(0,\infty)$ with the usual Hausdorff topology of real numbers.)

The algebraic pre-order on a cone $C$ is defined as $a\leq b$ if $a+c=b$ for some $c\in C$.
If the algebraic pre-order is an order then we speak of an \emph{algebraically ordered cone}.

The following result is standard.
It holds more generally in compact, ordered spaces as studied by Nachbin, \cite{Nach65TopOrder}, see \cite[Proposition~VI-1.3, p.441]{GieHof+03Domains}.

\begin{prp}
\label{prp:cpctConeDcpo}
Let $C$ be an algebraically ordered, compact cone.
Then $C$ is both directed complete and filtered complete. Moreover, given an upward (downward) directed subset $D$ of $C$, considering $D$ as a net indexed over itself, $D$ converges to~$\sup D$ (to $\inf D$).
\end{prp}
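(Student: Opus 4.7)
The plan is to first establish that the algebraic order is a closed subset of $C \times C$, and then to deduce both completeness statements together with the convergence claim by the standard fact that a net in a compact Hausdorff space with a unique cluster point converges to it.

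\textbf{Step 1 (closedness of $\leq$).} I would show that $\{(a,b)\in C\times C : a\leq b\}$ is closed. Take a convergent net $(a_\alpha,b_\alpha)\to(a,b)$ with $a_\alpha\leq b_\alpha$, and pick witnesses $c_\alpha\in C$ with $a_\alpha+c_\alpha=b_\alpha$. Since $C$ is compact, the net $(c_\alpha)$ admits a convergent subnet with some limit $c\in C$. Passing to this subnet and using joint continuity of addition, we get $a+c=b$, so $a\leq b$. In particular the algebraic order (which is a partial order by hypothesis) is a closed partial order.

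\textbf{Step 2 (directed completeness and convergence).} Let $D\subseteq C$ be upward directed and regard $D$ as a net indexed by itself. By compactness the net has at least one cluster point $a\in C$. For any fixed $d_0\in D$, directedness implies that the net is eventually in $\{d\in D : d\geq d_0\}$, so the relation $d_0\leq d_\alpha$ holds eventually along the net; closedness from Step 1 then forces $d_0\leq a$, proving that $a$ is an upper bound for $D$. Conversely, for any upper bound $b$ of $D$ we have $d_\alpha\leq b$ for every index, and closedness of $\leq$ gives $a\leq b$. Hence $a=\sup D$; in particular every cluster point of the net coincides with this sup, so the net has a unique cluster point and therefore converges to $\sup D$ in the compact space $C$.

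\textbf{Step 3 (filtered completeness).} The argument for a downward directed subset is the order-dual of Step 2: closedness of $\leq$ is equivalent to closedness of $\geq$, and the same cluster-point argument applied to a downward directed net produces a greatest lower bound that is simultaneously the limit.

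The only step that requires genuine care is Step 1: the witnesses $c_\alpha$ are not given to depend continuously on $\alpha$, so one must exploit compactness of $C$ to extract a convergent subnet, after which continuity of addition does the rest. Everything else is a routine application of the Nachbin-type principle that a net in a compact Hausdorff space with a unique cluster point must converge to it.
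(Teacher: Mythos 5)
Your proof is correct. The paper itself gives no argument here: it declares the result standard and points to Nachbin's theory of compact ordered spaces via \cite[Proposition~VI-1.3, p.441]{GieHof+03Domains}, which is precisely the statement that in a compact pospace every directed net converges to its supremum. Your Steps~2 and~3 reprove that cited general fact (cluster point of a net eventually in a closed upper set, uniqueness of the cluster point, convergence by compactness), and your Step~1 supplies the one ingredient that is genuinely specific to cones and that the paper leaves implicit, namely that the algebraic order is closed in $C\times C$; the compactness extraction of a convergent subnet of the witnesses $c_\alpha$ followed by joint continuity of addition is exactly the right way to get this. The only point worth flagging is that Step~1 uses uniqueness of limits (to identify $\lim (a_{\alpha}+c_{\alpha})$ with $b$), so you are tacitly assuming $C$ is Hausdorff; this is consistent with the Nachbin framework the paper invokes (and with the cones $F(S)$ to which the result is applied), and once the order is known to be closed and antisymmetric the diagonal is closed, so the assumption is self-consistent -- but it should be stated rather than smuggled in.
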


Let $C$ be an algebraically ordered, compact cone.
We set
\[
E(C) := \big\{ a\in C : 2a=a \big\}.
\]
Given $a\in C$, the sequence $(\tfrac{1}{n}a)_n$ is decreasing and therefore converges. We have $2\lim_n\tfrac{1}{n} a=\lim_n\tfrac{1}{n} a$, which justifies to define $\varepsilon\colon C\to E(C)$ by
\[
\varepsilon(a) := \lim_n \tfrac{1}{n} a,
\]
for $a\in C$.
It is straightforward to verify that $\varepsilon$ is additive and order-preserving. Moreover, we have $\varepsilon(a)+a=a$ for every $ a\in C$.

Following Wehrung (Definitions~1.12, 2.10, and~3.1 in \cite{Weh92InjectivePOM1}), we say that $C$ is \emph{pseudo-cancellative} if for all $a,b,c\in C$ with $a+c\leq b+c$ there exists $d\in C$ such that $a\leq b+d$ and $d+c=c$.

\begin{lma}
\label{prp:cpctConePseudoCanc}
Let $C$ be an algebraically ordered, compact cone.
Let $a,b,c\in C$ satisfy $a+c\leq b+c$.
Then $a+\varepsilon(c) \leq b+\varepsilon(c)$.
In particular, $C$ is pseudo-cancellative.
\end{lma}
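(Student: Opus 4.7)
The plan is to derive the target inequality by iterating the hypothesis and then passing to a limit using the compactness structure of $C$.

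First, I would iterate: starting from $a+c\leq b+c$, an easy induction on $n$ (adding $c$ to both sides at each step) gives $a+nc\leq b+nc$ for every integer $n\geq 1$. Since the algebraic order is compatible with scalar multiplication by $(0,\infty)$, scaling by $\tfrac{1}{n}$ yields $\tfrac{a}{n}+c\leq \tfrac{b}{n}+c$ for all $n\geq 1$. By \autoref{prp:cpctConeDcpo}, the decreasing sequences $(\tfrac{a}{n})_n$ and $(\tfrac{b}{n})_n$ converge to $\varepsilon(a)$ and $\varepsilon(b)$ respectively. Continuity of addition and closedness of the algebraic order in the compact ordered space $C\times C$ then force $\varepsilon(a)+c\leq \varepsilon(b)+c$.

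Next, I would strengthen this to an absorption identity. Writing $b+c=a+c+d$ for the witness $d\in C$ of the hypothesis, iteration gives $b+nc=a+nc+d$, which after scaling by $\tfrac{1}{n}$ reads $\tfrac{b}{n}+c=\tfrac{a}{n}+c+\tfrac{d}{n}$. Adding $b$ to both sides and letting $n\to\infty$ (using $t\,\varepsilon(x)=\varepsilon(x)$ for $t>0$, together with continuity and closedness) yields $b+\varepsilon(a)+c=b+c+\varepsilon(d)$, and a parallel manipulation produces $b+\varepsilon(a)+\varepsilon(c)=b+\varepsilon(c)$, so that $b+\varepsilon(c)$ absorbs $\varepsilon(a)$. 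In particular $\varepsilon(a)\leq \varepsilon(b)+\varepsilon(c)$.

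The crucial final step is to combine these with the hypothesis to obtain $a+\varepsilon(c)\leq b+\varepsilon(c)$. Using $\varepsilon(c)+c=c$, the hypothesis rewrites as $(a+\varepsilon(c))+c\leq (b+\varepsilon(c))+c$. Both sides now absorb $\varepsilon(c)$ (since $\varepsilon(c)+\varepsilon(c)=\varepsilon(c)$), and I would re-run the iteration-scale-limit machinery on this rewritten inequality, exploiting the absorption identities derived above (which allow the ``$c$'' to collapse to $\varepsilon(c)$ in the limit) to cancel $c$ and conclude $a+\varepsilon(c)\leq b+\varepsilon(c)$. Pseudo-cancellativity then follows by setting $d:=\varepsilon(c)$: indeed $d+c=\varepsilon(c)+c=c$ and $a\leq a+\varepsilon(c)\leq b+\varepsilon(c)=b+d$.

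The main obstacle lies in the last step. The straightforward iteration-scaling arguments naturally produce $\varepsilon$ on the ``wrong'' side---inequalities of the shape $\varepsilon(a)+c\leq \varepsilon(b)+c$, or absorption statements for $\varepsilon(a), \varepsilon(d)$---whereas the claim requires $\varepsilon$ on the $c$-slot. Bridging this gap is where the compactness of $C$, the identity $\varepsilon(c)+c=c$, and the closedness of the algebraic order must be weaved together with care; I expect that the delicate point is arranging the scaling so that the limit produces $a$ and $b$ unaltered while $c$ is simultaneously replaced by $\varepsilon(c)$.
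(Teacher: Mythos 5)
There is a genuine gap, and you have in fact located it yourself: your ``iteration--scale--limit machinery'' always scales $a$, $b$, and $c$ by the same factor, so in the limit the $\varepsilon$ lands on $a$ and $b$ (giving $\varepsilon(a)+c\leq\varepsilon(b)+c$) rather than on $c$. The final paragraph, where you propose to ``re-run'' the machinery on $(a+\varepsilon(c))+c\leq(b+\varepsilon(c))+c$ and have the absorption identities ``collapse $c$ to $\varepsilon(c)$,'' does not describe an actual argument: running the same iteration on this rewritten inequality again produces $\varepsilon$ on the $a$- and $b$-slots, and none of the absorption identities you derived (which concern $\varepsilon(a)$ and $\varepsilon(d)$, not $\varepsilon(c)$) provides a mechanism for replacing $c$ by $\varepsilon(c)$ while leaving $a$ and $b$ untouched. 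So the crucial step is missing, not merely delicate.

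The missing idea is a halving trick that scales \emph{only} $c$. From $a+c\leq b+c$, multiply by $\tfrac{1}{2}$ to get $\tfrac{1}{2}a+\tfrac{1}{2}c\leq\tfrac{1}{2}b+\tfrac{1}{2}c$. Adding $\tfrac{1}{2}a$ to both sides gives $a+\tfrac{1}{2}c\leq\tfrac{1}{2}a+\tfrac{1}{2}b+\tfrac{1}{2}c$, and applying the halved inequality once more (this time adding $\tfrac{1}{2}b$ to both of its sides) bounds the right-hand side by $b+\tfrac{1}{2}c$. Thus $a+\tfrac{1}{2}c\leq b+\tfrac{1}{2}c$: the hypothesis reproduces itself with $c$ replaced by $\tfrac{1}{2}c$ and with $a,b$ at full strength. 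Iterating yields $a+\tfrac{1}{2^n}c\leq b+\tfrac{1}{2^n}c$ for all $n$, and passing to the limit using \autoref{prp:cpctConeDcpo}, continuity of addition, and closedness of the order gives $a+\varepsilon(c)\leq b+\varepsilon(c)$. Your first paragraph (iterate, scale, take limits) and your closing deduction of pseudo-cancellativity from $d:=\varepsilon(c)$ are fine; it is only this pivotal rescaling step that needs to be supplied.
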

\begin{proof}
Multiplying by $\tfrac{1}{2}$ in $a+c\leq b+c$ we get
\[
\tfrac{1}{2}a + \tfrac{1}{2}c \leq \tfrac{1}{2}b + \tfrac{1}{2}c.
\]
Adding $\tfrac{1}{2}a$, and then using the above inequality, we obtain
\[
a + \tfrac{1}{2}c
\leq \tfrac{1}{2}a + \tfrac{1}{2}b + \tfrac{1}{2}c
\leq b + \tfrac{1}{2}c.
\]
Thus, we inductively deduce that $a + \tfrac{1}{2^n}c \leq b + \tfrac{1}{2^n}c$, for each $n\in\NN$.
It follows that
\[
a+\varepsilon(c)
= \lim_n (a+\tfrac{1}{2^n}c)
\leq \lim_n (b+\tfrac{1}{2^n}c)
= b+\varepsilon(c),
\]
as desired.
\end{proof}

Recall that a monoid $M$ is said to satisfy \emph{Riesz refinement} if for all $a_1,a_2,b_1,b_2\in M$ with $a_1+a_2=b_1+b_2$ there exist $x_{i,j}\in M$, for $i,j=1,2$, such that $a_i=x_{i,1}+x_{i,2}$ for $i=1,2$, and $b_j=x_{1,j}+x_{2,j}$ for $j=1,2$.

An \emph{inf-semilattice ordered monoid} is a positively ordered monoid $M$ that is an inf-semilattice and such that addition is distributive over $\wedge$, that is,
\begin{align}
\label{pgr:pom:eqDistrWedge}
a+(b\wedge c)=(a+b)\wedge(a+c),
\end{align}
for all $a,b,c\in M$.
Dually, one defines sup-semilattice ordered monoids.
A \emph{lattice-ordered monoid} is a positively ordered monoid $M$ that is a lattice and such that addition is distributive over $\wedge$ and $\vee$, that is, for all $a,b,c\in M$ we have \eqref{pgr:pom:eqDistrWedge} and $a+(b\vee c)=(a+b)\vee(a+c)$.

The following proposition is a consequence of results of Wehrung:	

\begin{prp}
\label{prp:coneSemilatticeOrdered}
Let $C$ be an algebraically ordered, compact cone.
Then the following are equivalent:
\begin{enumerate}
\item
$C$ satisfies Riesz refinement.
\item
$C$ is inf-semilattice ordered.
\item
$C$ is lattice ordered.
\end{enumerate}
\end{prp}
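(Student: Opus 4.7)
The plan is to derive each of the three implications from the order-theoretic framework for positively ordered monoids developed by Wehrung in \cite{Weh92InjectivePOM1}, combined with the structural features already at our disposal: $C$ is directed complete and filtered complete by \autoref{prp:cpctConeDcpo}, and is pseudo-cancellative by \autoref{prp:cpctConePseudoCanc}. These three ingredients (completeness in both directions, and pseudo-cancellativity) are precisely the hypotheses under which Wehrung's characterizations of refinement via lattice structure become available.

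The implication (3) $\Rightarrow$ (2) is immediate from the definition of lattice-ordered monoid. For (2) $\Rightarrow$ (1), I would use the standard fact that in an inf-semilattice ordered, pseudo-cancellative positively ordered monoid, Riesz refinement holds: given $a_1 + a_2 = b_1 + b_2$, the candidate refinement $x_{i,j} := a_i \wedge b_j$ produces $x_{i,1} + x_{i,2} \leq a_i$ and $x_{1,j} + x_{2,j} \leq b_j$ via distributivity of $+$ over $\wedge$; adding these inequalities gives $x_{1,1} + x_{1,2} + x_{2,1} + x_{2,2} \leq a_1 + a_2 = b_1 + b_2$ and a symmetric comparison, after which \autoref{prp:cpctConePseudoCanc} forces equality in each.

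For (1) $\Rightarrow$ (3) the heart of the proof is to manufacture meets (and, dually, joins) from refinement together with the completeness of $C$. Given $a, b \in C$, I would consider $L := \{c \in C : c \leq a, c \leq b\}$; once $L$ is shown to be upward directed, \autoref{prp:cpctConeDcpo} produces $a \wedge b := \sup L$. To verify directedness, given $c_1, c_2 \in L$, write $c_i + d_i = a$ and $c_i + e_i = b$, and apply Riesz refinement to $c_1 + d_1 = c_2 + d_2$ to obtain an upper bound of $c_1, c_2$ sitting below $a$; a parallel refinement of $c_1 + e_1 = c_2 + e_2$ produces an upper bound sitting below $b$, and these two bounds are then merged into a single element of $L$ via \autoref{prp:cpctConePseudoCanc}. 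Joins are constructed dually as infima of common upper bounds, using filtered completeness. The distributive laws $a + (b \wedge c) = (a+b) \wedge (a+c)$ and its dual follow from one final application of refinement applied to the relations defining the meet and join.

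The main obstacle will be the last direction, and specifically the combinatorial step of nesting two refinements with a pseudo-cancellation step so as to force a single common upper bound that lies simultaneously below $a$ and below $b$. This is the place where the three ingredients (refinement, bicompleteness, pseudo-cancellativity) interact non-trivially, and where Wehrung's framework does the real work; the remaining verifications are routine manipulations within that framework.
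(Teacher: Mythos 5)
Your implication (2) $\Rightarrow$ (1) contains a concrete error. The candidate matrix $x_{i,j}:=a_i\wedge b_j$ is not a refinement of $a_1+a_2=b_1+b_2$: already in the cone $C=[0,\infty]$ (which is algebraically ordered, compact, and lattice ordered), taking $a_1=a_2=b_1=b_2=1$ gives $x_{1,1}+x_{1,2}=2\neq 1=a_1$. The claimed inequality $x_{i,1}+x_{i,2}\leq a_i$ ``via distributivity'' in fact goes the wrong way: distributivity yields $(a_1\wedge b_1)+(a_1\wedge b_2)=(a_1+a_1)\wedge(a_1+b_1)\wedge(a_1+b_2)\wedge(b_1+b_2)$, and each of the four terms dominates $a_1$ in the algebraic order, so the left-hand side is $\geq a_1$, with strict inequality in general. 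A correct refinement matrix should have $x_{1,1}=a_1\wedge b_1$ but \emph{complements} in the remaining entries (in a group one would take $x_{1,2}=a_1-x_{1,1}$, etc.), and manufacturing such complements without subtraction is exactly where pseudo-cancellativity enters; this is the content of \cite[Proposition~1.23]{Weh92InjectivePOM1}, which the paper simply invokes after establishing pseudo-cancellativity in \autoref{prp:cpctConePseudoCanc}.

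Your implication (1) $\Rightarrow$ (3) also leaves its only nontrivial step unproved. Defining $a\wedge b$ as the supremum of the set $L$ of common lower bounds is a reasonable idea, and a single refinement of $c_1+d_1=c_2+d_2=a$ does produce a common upper bound of $c_1,c_2$ lying below $a$ (namely $x_{1,1}+x_{1,2}+x_{2,1}$). But the assertion that this bound and the analogous one below $b$ can be ``merged into a single element of $L$ via \autoref{prp:cpctConePseudoCanc}'' is precisely the interpolation-type statement whose proof is the whole difficulty; pseudo-cancellation by itself does not produce an element that is simultaneously below $a$, below $b$, and above both $c_1$ and $c_2$. The paper sidesteps all of this by citing \cite[Lemma~1.16]{Weh92InjectivePOM1} for (1) $\Rightarrow$ (3), with (3) $\Rightarrow$ (2) being immediate from the definitions. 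As written, both nontrivial implications in your proposal have gaps, and the first rests on a false identity; a self-contained proof would require reproducing Wehrung's arguments rather than the manipulations you describe.
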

\begin{proof}
By \autoref{prp:cpctConePseudoCanc}, $C$ is pseudo-cancellative.
Therefore, it follows from \cite[Proposition~1.23]{Weh92InjectivePOM1} that~(2) implies~(1);
and it follows from \cite[Lemma~1.16]{Weh92InjectivePOM1} that (1) implies~(3).
\end{proof}

Let $C$ be an algebraically ordered, compact cone satisfying Riesz refinement.
Let~$C^*$ denote the collection of linear maps $C\to[0,\infty]$, where by a linear map we understand an additive map satisfying $f(0)=0$, and such that $f(ta)=tf(a)$ for all $t\in(0,\infty)$ and $a\in C$.
We equip $C^*$ with pointwise addition and the algebraic order.
In fact, the algebraic order on $C^*$ agrees with the pointwise order.
The property of Riesz refinement of $C$ implies that $C^*$ is lattice ordered.
Further, the infimum and supremum of elements $f,g\in C^*$ are given by the Riesz-Kantorovich formulas:
\begin{align}
	\label{prp:MdualInj:eqInfDual}
	(f\wedge g)(a)
	&= \inf\big\{ f(a_1)+g(a_2) : a=a_1+a_2 \big\}, \\
	\label{prp:MdualInj:eqSupDual}
	(f\vee g)(a)
	&= \sup\big\{ f(a_1)+g(a_2) : a=a_1+a_2 \big\},
\end{align}
for $a\in C$.
(See \cite[Lemma~1.12]{Sho90DualityCardAlg}.)

A map $f\in C^*$ is said to be \emph{lower semicontinuous} if for every $t\in[0,\infty)$ the set $\{a\in C : f(a)\leq t\}$ is closed (in the topology of $C$).
We let $C'$ denote the family of lower semicontinuous maps in $C^*$.
It is easy to see that $C'$ is closed under addition.
The partial order on $C'$ (induced by $C^*$) is the pointwise order, and it is usually not the algebraic order, even though $C^*$ is algebraically ordered.

\begin{lma}
\label{prp:coneDualRealizeInfSup}
Let $C$ be an algebraically ordered, inf-semilattice ordered, compact cone, let $f,g\in C'$, and let $a\in C$.
Then the infimum in \eqref{prp:MdualInj:eqInfDual} is realized.
More precisely, there exist $a_1,a_2\in C$ with $a=a_1+a_2$ and $(f\wedge g)(a)=f(a_1)+g(a_2)$.
\end{lma}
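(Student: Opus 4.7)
The plan is to cast the problem as a minimization of a lower semicontinuous function on a compact space. I would consider the subset
\[
K := \{(a_1,a_2)\in C\times C : a_1+a_2=a\}
\]
of $C\times C$, together with the function $h\colon C\times C\to[0,\infty]$ defined by $h(a_1,a_2):=f(a_1)+g(a_2)$. Since $+\colon C\times C\to C$ is jointly continuous and $C$ is compact (Hausdorff, as implicit in the Nachbin-style setting invoked in \autoref{prp:cpctConeDcpo}), the singleton $\{a\}$ is closed in $C$, so $K=+^{-1}(\{a\})$ is a closed, and hence compact, subset of the compact space $C\times C$. It is also nonempty, since $(a,0)\in K$.

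Next I would observe that $h$ is lower semicontinuous on $C\times C$: given $t\in[0,\infty)$, the set $\{h\le t\}$ is contained in the intersection over $s\in[0,t]\cap\mathbb{Q}$ of the closed sets $\{(a_1,a_2):f(a_1)\le s\}\cap\{(a_1,a_2):g(a_2)\le t-s\}$, and a standard argument using lower semicontinuity of $f$ and $g$ on $C$ together with continuity of addition on $[0,\infty]$ shows that $\{h\le t\}$ is closed. Restricted to $K$, the function $h$ therefore remains lower semicontinuous on a nonempty compact space, and so attains its infimum at some pair $(a_1,a_2)\in K$.

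Combining this with the Riesz-Kantorovich formula \eqref{prp:MdualInj:eqInfDual} yields
\[
f(a_1)+g(a_2)=\inf_{(b_1,b_2)\in K} h(b_1,b_2)=(f\wedge g)(a),
\]
which is the desired conclusion. I do not anticipate any genuine obstacle: once the compactness of $K$ and the lower semicontinuity of $h$ are in place, the result reduces to the classical fact that a lower semicontinuous, $[0,\infty]$-valued function on a nonempty compact space attains its minimum. The only point that needs a moment's care is that $f$ and $g$ may take the value $\infty$, but this is harmless since the relevant notion of lower semicontinuity for $[0,\infty]$-valued functions and the attainment of minima on compact spaces both extend to this setting.
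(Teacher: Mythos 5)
Your proof is correct and is essentially the paper's argument in different packaging: the paper takes a minimizing sequence for the Riesz--Kantorovich infimum, extracts convergent subnets by compactness of $C$, and uses continuity of addition together with lower semicontinuity of $f$ and $g$ --- which is exactly the content of your ``a lower semicontinuous function on the compact fibre $K$ of the addition map over $a$ attains its minimum''. One small slip worth fixing: your justification that $\{h\le t\}$ is closed is stated backwards, since that set is a \emph{union} (not an intersection) of the products $\{f\le s\}\times\{g\le t-s\}$, and a union of closed sets need not be closed; the clean argument is via nets, namely $f(a_1)+g(a_2)\le\liminf_j f(a_{1,j})+\liminf_j g(a_{2,j})\le\liminf_j\bigl(f(a_{1,j})+g(a_{2,j})\bigr)$ for $(a_{1,j},a_{2,j})\to(a_1,a_2)$, which is precisely the estimate the paper uses.
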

\begin{proof}
Choose sequences $(a_{1,n})_n$ and $(a_{2,n})_n$ in $C$ such that $a=a_{1,n}+a_{2,n}$ for each~$n$, and such that
\[
(f\wedge g)(a)=\lim_n \big( f(a_{1,n})+g(a_{2,n}) \big).
\]

Since $C$ is compact, we can choose convergent subnets
such that $(a_{1,n(j)})_{j\in J}$ and $(a_{2,n(j)})_{j\in J}$ converge to some $a_1$ and $a_2$ in $C$, respectively.
Then $a=a_1+a_2$.
Using this at the last step, and using that $f$ and $g$ are lower semicontinuous at the second step, we obtain
\[
(f\wedge g)(a)
= \lim_j \big( f(a_{1,n(j)})+g(a_{2,n(j)}) \big)
\geq f(a_1)+g(a_2)
\geq (f\wedge g)(a).
\]

Thus, we have $a_1,a_2\in C$ such that $(f\wedge g)(a) =f(a_1)+g(a_2)$ and $a=a_1+a_2$.
\end{proof}

The following result contains analogs of results in \cite{Weh92InjectivePOM1}
for lower semicontinuous functionals.
It can also be considered as an analog of \cite[Theorem~4.2.2]{Rob13Cone}.

\begin{thm}
\label{prp:coneLscDual}
Let $C$ be an algebraically ordered, inf-semilattice ordered, compact cone.
Then $C'\subseteq C^*$ is closed under finite infima and directed suprema.
Moreover, given $f,g,h\in C'$ and an increasing net $(g_j)_j$ in $C'$, we have
\begin{align}
\label{prp:coneLscDual:eqInfDirectSup}
f\wedge ( \sup_j g_j ) &= \sup_j (f\wedge g_j), \\
\label{prp:coneLscDual:eqInfSum}
f+(g\wedge h) &= (f+g)\wedge(f+h).
\end{align}
\end{thm}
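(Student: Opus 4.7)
The plan is to establish the four claims in order. For closure of $C'$ under directed suprema, given an increasing net $(f_j)_j$ in $C'$ with pointwise supremum $f$, linearity of $f$ follows from the fact that directed suprema of additive maps into $[0,\infty]$ are additive, and lower semicontinuity is immediate because $\{a : f(a)\leq t\} = \bigcap_j \{a : f_j(a) \leq t\}$ is an intersection of closed sets. Preservation of scalar multiplication is equally routine.

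The main work is closure under binary infima (finite infima follow by induction). For $f, g \in C'$, the Riesz-Kantorovich formula~\eqref{prp:MdualInj:eqInfDual} defines $f\wedge g$ as an element of $C^*$. To prove lower semicontinuity, I would fix $t\in[0,\infty)$, take a net $(a_i)_i$ converging to some $a$ with $(f\wedge g)(a_i) \leq t$, and apply \autoref{prp:coneDualRealizeInfSup} to realize each infimum: for each $i$ there exist $a_{1,i}, a_{2,i}\in C$ with $a_{1,i} + a_{2,i} = a_i$ and $f(a_{1,i}) + g(a_{2,i}) \leq t$. By compactness of $C$ (and of the interval $[0,t]$) one can pass to a subnet along which $a_{1,i}\to a_1$, $a_{2,i}\to a_2$, and both $f(a_{1,i})$ and $g(a_{2,i})$ converge. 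Continuity of addition gives $a_1 + a_2 = a$, while lower semicontinuity of $f$ and $g$ yields $f(a_1) + g(a_2) \leq t$, so that $(f\wedge g)(a)\leq t$. Trading a pointwise infimum for a realized one via \autoref{prp:coneDualRealizeInfSup} and then exploiting compactness is the main obstacle.

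For~\eqref{prp:coneLscDual:eqInfDirectSup}, the inequality $\geq$ is immediate since $f\wedge g_j \leq f\wedge g$ for each $j$, where $g := \sup_j g_j$ (which lies in $C'$ by the first step). For the reverse, given $a\in C$, I apply \autoref{prp:coneDualRealizeInfSup} to produce a decomposition $a = a_1 + a_2$ with $(f\wedge g)(a) = f(a_1) + g(a_2)$; since the pointwise supremum of the increasing net $(g_j)_j$ is $g$, we have $g(a_2) = \sup_j g_j(a_2)$, hence $f(a_1) + g(a_2) = \sup_j \bigl(f(a_1) + g_j(a_2)\bigr) \geq \sup_j (f\wedge g_j)(a)$.

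Equation~\eqref{prp:coneLscDual:eqInfSum} is algebraic from Riesz-Kantorovich: using linearity of $f$ to rewrite $f(a_1) + f(a_2) = f(a)$ whenever $a_1 + a_2 = a$, one gets
\[
\bigl((f+g)\wedge(f+h)\bigr)(a)
= \inf\bigl\{ f(a_1) + g(a_1) + f(a_2) + h(a_2) : a_1 + a_2 = a \bigr\}
= f(a) + (g\wedge h)(a),
\]
so the identity already holds in $C^*$; since $g\wedge h \in C'$ by the previous step and $C'$ is closed under sums, both sides lie in $C'$.
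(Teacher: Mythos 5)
Your treatment of closure under directed suprema, closure under binary infima, and \eqref{prp:coneLscDual:eqInfSum} is correct and essentially matches the paper's proof (for \eqref{prp:coneLscDual:eqInfSum} the paper appeals abstractly to the fact that $C^*$ is lattice-ordered, whereas you compute directly from the Riesz--Kantorovich formula; both are fine).

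However, your argument for the nontrivial inequality in \eqref{prp:coneLscDual:eqInfDirectSup} proves the wrong direction. Writing $g:=\sup_j g_j$, you fix one decomposition $a=a_1+a_2$ realizing $(f\wedge g)(a)=f(a_1)+g(a_2)$ and conclude $f(a_1)+g(a_2)=\sup_j\bigl(f(a_1)+g_j(a_2)\bigr)\geq\sup_j(f\wedge g_j)(a)$. The last inequality holds because $a=a_1+a_2$ is merely one competitor in the infimum defining $(f\wedge g_j)(a)$, so the whole chain yields $(f\wedge g)(a)\geq\sup_j(f\wedge g_j)(a)$ --- exactly the ``immediate'' inequality you already had. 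A decomposition that is optimal for $g$ gives no upper bound on $(f\wedge g)(a)$ in terms of the $(f\wedge g_j)(a)$, since the infimum for each $g_j$ may be attained at a quite different decomposition. The paper's argument runs the other way: for each $j$ it realizes $(f\wedge g_j)(a)=f(a_{j,1})+g_j(a_{j,2})$ with $a=a_{j,1}+a_{j,2}$ via \autoref{prp:coneDualRealizeInfSup}, uses compactness to pass to a subnet with $a_{j(i),1}\to a_1$ and $a_{j(i),2}\to a_2$ (so $a=a_1+a_2$), and then combines lower semicontinuity of $f$ with the estimate $\lim_i g_{j(i)}(a_{j(i),2})\geq g_{j(i_0)}(a_2)$ for every fixed $i_0$ --- which uses that the net is increasing and that each $g_{j(i_0)}$ is lower semicontinuous --- to obtain $\sup_j(f\wedge g_j)(a)\geq f(a_1)+g(a_2)\geq(f\wedge g)(a)$. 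You need an argument of this type; the one you give does not close the gap.
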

\begin{proof}
By \autoref{prp:coneSemilatticeOrdered}, $C$ satisfies Riesz refinement, and thus we obtain that $C^*$ is lattice-ordered with infimum given by \eqref{prp:MdualInj:eqInfDual}.

We first show that $C'$ is closed under infima. Let $f,g\in C'$. In order to verify that $f\wedge g$ in $C^*$ is lower semicontinuous (and thus it belongs to $C'$), we have to check that the set
\[
T:=\{a\in C \colon (f\wedge g)(a)\leq t\}
\]
is closed for any $t\in [0,\infty)$.
Let $(a_j)_{j\in J}$ be a net in $T$ that converges to $a$ in $C$.
For each $j\in J$ apply \autoref{prp:coneDualRealizeInfSup} to obtain $a_{j,1},a_{j,2}\in C$ such that
\[
f(a_{j,1})+g(a_{j,2}) = (f\wedge g)(a_j)\leq t,\andSep
a_j=a_{j,1}+a_{j,2}.
\]
Using that $C$ is compact, choose a subnet $(j(i))_{i\in I}$ such that $(a_{j(i),1})_{i\in I}$ and $(a_{j(i),2})_{i\in I}$ converge to some $a_1$ and $a_2$ in $C$, respectively.
Then $a=a_1+a_2$.
Using at the third step that $f$ and $g$ are lower semicontinuous, we deduce
\begin{align*}
(f\wedge g)(a)
&\leq f(a_1)+g(a_2)\\
&= f\big( \lim_{i\in I} a_{j(i),1} \big) + g\big( \lim_{i\in I} a_{j(i),2} \big) \\
&\leq \liminf_{i\in I} f\big( a_{j(i),1} \big) + \liminf_{i\in I} g\big( a_{j(i),2} \big) \\
&\leq \liminf_{i\in I} \left( f(a_{j(i),1}) + g(a_{j(i),2}) \right)
\leq t.
\end{align*}

Secondly, it is straightforward to verify that lower semicontinuity passes to suprema of upward directed families. Further, \eqref{prp:coneLscDual:eqInfSum} follows using that $C^*$ is lattice-ordered.

Finally, let us verify \eqref{prp:coneLscDual:eqInfDirectSup}.
Let $f\in C'$, and let $(g_j)_{j\in J}$ be an increasing net in~$C'$.
Set $g:=\sup_j g_j$.
It is straightforward to verify that $f\wedge g \geq \sup_j(f\wedge g_j)$.
To show the converse inequality, let $a\in C$.
Given $j\in J$, apply \autoref{prp:coneDualRealizeInfSup} to obtain $a_{j,1},a_{j,2}\in C$ such that
\[
(f\wedge g_j)(a)=f(a_{j,1})+g_j(a_{j,2}),\andSep a=a_{j,1}+a_{j,2}.
\]
Using that $C$ is compact, choose a subnet $(j(i))_{i\in I}$ such that $(a_{j(i),1})_{i\in I}$ and $(a_{j(i),2})_{i\in I}$ converge to some $a_1$ and $a_2$ in $C$, respectively.
Then $a=a_1+a_2$.
For each $i_0\in I$, using at the first step that the net $(g_j)_j$ is increasing, and using at the second step that $g_{j(i_0)}$ is lower semicontinuous, we obtain
\[
\lim_i g_{j(i)}(a_{j(i),2})
\geq \lim_i g_{j(i_0)}(a_{j(i),2})
\geq g_{j(i_0)}(a_2).
\]
Since this holds for all $i_0\in I$, we deduce
\[
\lim_{i\in I} g_{j(i)}(a_{j(i),2})
\geq \sup_{i_0\in I} g_{j(i_0)}(a_2)
= g(a_2).
\]
Using this inequality and using that $f$ is lower semicontinuous at the third step, we obtain
\begin{align*}
	\sup_{j\in J}(f\wedge g_j)(a)
	&= \sup_{j\in J} \big( f(a_{j,1})+g_j(a_{j,2}) \big) \\
	&= \lim_{i\in I} \big( f(a_{j(i),1})+g_{j(i)}(a_{j(i),2}) \big) \\
	&\geq f(a_1)+g(a_2) \\
	&\geq (f\wedge g)(a),
\end{align*}
as desired.
\end{proof}

\subsection{The cone of functionals on a Cu-semigroup}
\label{coneFS}

Let $S$ be a \CuSgp{}. 
A map $\lambda\colon S\to [0,\infty]$ is called a \emph{functional} if $\lambda(0)=0$ and if $\lambda$ preserves addition, order and suprema of increasing sequences.
We denote by $F(S)$ the set of functionals on $S$.
This is a cone when endowed with the operations of pointwise addition and pointwise scalar multiplication by positive reals.
We also equip $F(S)$ with the topology such that $\lambda_j\to \lambda$, for a given net $(\lambda_j)_j$ and a functional $\lambda$ in $F(S)$, provided that 
\[
\limsup_j \lambda_j(x') 
\leq \lambda(x) 
\leq \liminf_j \lambda_j(x)
\]
for all $x',x\in S$ with $x'\ll x$.
Then $F(S)$ is a compact cone;
see \cite{EllRobSan11Cone, Rob13Cone},
see also \cite[Theorem~3.17]{Kei17CuSgpDomainThy}.
If $S$ satisfies \axiomO{5}, then $F(S)$ is algebraically ordered;
see \cite[Proposition~2.2.3]{Rob13Cone}.
Further, if $S$ satisfies \axiomO{5} and \axiomO{6}, then $F(S)$ is an algebraically ordered, lattice ordered, compact cone;
see \cite[Theorem~4.1.2]{Rob13Cone}.
Combined with \autoref{prp:coneSemilatticeOrdered}, we deduce the following:

\begin{thm}
\label{prp:refinementFS}
Let $S$ be a \CuSgp{} satisfying \axiomO{5} and \axiomO{6}.
Then $F(S)$ satisfies Riesz refinement.
\end{thm}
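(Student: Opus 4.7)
The plan is to assemble this as an essentially immediate corollary of two already-cited inputs. First, I would invoke the results from \cite{Rob13Cone} summarized just above the statement: under \axiomO{5}, $F(S)$ is an algebraically ordered compact cone (\cite[Proposition~2.2.3]{Rob13Cone}), and under \axiomO{5} together with \axiomO{6} it is moreover lattice ordered (\cite[Theorem~4.1.2]{Rob13Cone}). This places $F(S)$ squarely in the class of cones covered by \autoref{prp:coneSemilatticeOrdered}.

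Then I would simply apply \autoref{prp:coneSemilatticeOrdered}: for an algebraically ordered compact cone, being lattice ordered is equivalent to satisfying Riesz refinement. Since we already have lattice orderedness from Robert's theorem, the conclusion follows at once.

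There is no substantive obstacle here, because the real work was done in two places: the structural results on $F(S)$ established in \cite{Rob13Cone}, and \autoref{prp:coneSemilatticeOrdered} above, whose nontrivial implication (that lattice ordered implies Riesz refinement) rested on pseudo-cancellativity of $C$ via \autoref{prp:cpctConePseudoCanc} and Wehrung's \cite[Lemma~1.16]{Weh92InjectivePOM1}. The only thing to check in the present proof is that the compact cone $F(S)$ meets the hypotheses of \autoref{prp:coneSemilatticeOrdered}, which is precisely what the cited results of Robert provide. Thus the proof collapses to a one-line chain of implications citing \cite[Proposition~2.2.3]{Rob13Cone}, \cite[Theorem~4.1.2]{Rob13Cone}, and \autoref{prp:coneSemilatticeOrdered}.
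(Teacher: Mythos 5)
Your proposal is correct and is precisely the paper's argument: the theorem is presented there as an immediate consequence of the cited results of Robert (that under \axiomO{5} and \axiomO{6} the cone $F(S)$ is an algebraically ordered, lattice ordered, compact cone) combined with \autoref{prp:coneSemilatticeOrdered}. One minor slip in your commentary only: in the proof of \autoref{prp:coneSemilatticeOrdered} the passage from lattice ordered to Riesz refinement goes via the trivial implication (3)$\Rightarrow$(2) followed by (2)$\Rightarrow$(1), which is \cite[Proposition~1.23]{Weh92InjectivePOM1} (using pseudo-cancellativity), whereas \cite[Lemma~1.16]{Weh92InjectivePOM1} is what gives (1)$\Rightarrow$(3).
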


Since the Cuntz semigroup of a C*-algebra is a \CuSgp{} satisfying \axiomO{5} and \axiomO{6}, the previous result applies to $F(\Cu(A))$.
Moreover, by \cite[Theorem~4.4]{EllRobSan11Cone}, the cone of functionals $F(\Cu(A))$ is isomorphic (as an ordered topological cone) to the cone of lower semicontinuous 2-quasitraces $\QT(A)$ on $A$ via the assignment
\[
\QT(A)\to F(\Cu(A)),\quad \tau\mapsto d_\tau.
\]
Here, $d_\tau([a]):=\lim_n \tau(a^{1/n})$ for $a\in (A\otimes\mathcal K)_+$.
We thus obtain the following result, which does not seem to have appeared in the literature before.

\begin{cor}
\label{prp:refinementQT}
Let $A$ be a \ca{}.
Then the cone $\QT(A)$ of lower semicontinuous $2$-quasitraces satisfies Riesz refinement.
\end{cor}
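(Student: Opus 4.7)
The proof should be essentially a transport-of-structure argument invoking the results just established, so my plan is quite short. First, I would verify that the Cuntz semigroup $\Cu(A)$ satisfies the axioms \axiomO{5} and \axiomO{6}; this is recorded at the end of \autoref{sec:Cu} (with \axiomO{5} from \cite{AntPerThi18TensorProdCu} and \axiomO{6} from \cite{Rob13Cone}). Hence $\Cu(A)$ is a \CuSgp{} satisfying both axioms, and \autoref{prp:refinementFS} applies to give that the cone $F(\Cu(A))$ of functionals on $\Cu(A)$ satisfies Riesz refinement.

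Next, I would invoke the identification from \cite[Theorem~4.4]{EllRobSan11Cone}, explicitly recalled in the paragraph preceding the corollary: the map $\tau\mapsto d_\tau$ is an isomorphism of ordered topological cones from $\QT(A)$ onto $F(\Cu(A))$. Since Riesz refinement is a purely algebraic property of the underlying additive monoid (it only refers to addition, not to the topology), it transports across any monoid isomorphism. I would therefore conclude that $\QT(A)$ inherits Riesz refinement from $F(\Cu(A))$.

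There is no real obstacle: all the work has been done upstream, in \autoref{prp:coneSemilatticeOrdered} and \autoref{prp:refinementFS}, together with the existing identification $\QT(A)\cong F(\Cu(A))$. The only minor thing worth flagging in the write-up is the explicit observation that the addition in $\QT(A)$ corresponds, under $\tau\mapsto d_\tau$, to the pointwise addition of functionals on $\Cu(A)$, so that a refinement obtained in $F(\Cu(A))$ pulls back to a refinement of the original $2$-quasitraces. After that remark, the proof can be stated in one or two lines.
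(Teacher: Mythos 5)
Your proposal is correct and follows exactly the route the paper takes: apply \autoref{prp:refinementFS} to $F(\Cu(A))$ using that $\Cu(A)$ satisfies \axiomO{5} and \axiomO{6}, and then transport Riesz refinement through the cone isomorphism $\tau\mapsto d_\tau$ of \cite[Theorem~4.4]{EllRobSan11Cone}. Your added remark that refinement is a purely additive property and hence passes through the monoid isomorphism is a sensible (if implicit in the paper) point to make explicit.
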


\begin{rmk}
\label{rmk:refinementQT}
For unital, simple \ca{s}, \autoref{prp:refinementQT} follows from more classical results of Blackadar and Handelman, \cite{BlaHan82DimFct}.
Indeed, they show that if~$A$ is a unital \ca{}, then the cone $\QT_b(A)$ of bounded 2-quasitraces is lattice ordered.
Since this cone embeds in a vector space, it follows from the well known equivalence between Riesz interpolation and Riesz refinement in the setting of ordered abelian groups that $\QT_b(A)$ has Riesz refinement;
see, for example, \cite[Proposition~2.1]{Goo86GpsInterpolation}.
If $A$ is also simple, then $\QT(A)=\QT_b(A)\cup \{\tau_\infty\}$, where $\tau_\infty\colon A_+\to [0,\infty]$ is infinite on all non-zero elements of $A_+$.
It is then straightforward to extend the Riesz refinement from $\QT_b(A)$ to $\QT(A)$.
\end{rmk}

Given a \CuSgp{} $S$ satisfying \axiomO{5}, recall that $F(S)'$ denotes the family of linear, lower semicontinuous functions $f\colon F(S)\to [0,\infty]$.
(Note that $F(S)'$ is denoted by $\text{Lsc}(F(S))$ in \cite{AntPerThi18TensorProdCu} and \cite{Rob13Cone}.)
Given $x\in S$, we obtain $\widehat{x}\in F(S)'$ defined by $\widehat{x}(\lambda)=\lambda(x)$, for $\lambda\in F(S)$.
Since $F(S)$ is an algebraically ordered, lattice ordered, compact cone, we may apply
\autoref{prp:coneDualRealizeInfSup} and~\autoref{prp:coneLscDual} to obtain:

\begin{prp}
\label{prp:FSLscDual}
Let $S$ be a \CuSgp{} satisfying \axiomO{5} and \axiomO{6}.
Then $F(S)'$ is an inf-semilattice-ordered, directed complete cone, with infimum given as in \eqref{prp:MdualInj:eqInfDual}.
In particular, given $x,y\in S$, the infimum of $\widehat{x}$ and $\widehat{y}$ in $F(S)'$ satisfies
\begin{align}
\label{prp:FSLscDual:eqInf}
(\widehat{x}\wedge\widehat{y})(\lambda) 
= \inf \big\{ \lambda_1(x) + \lambda_2(y) : \lambda=\lambda_1+\lambda_2 \big\},
\end{align}
for all $\lambda\in F(S)$, and the infimum is attained.
\end{prp}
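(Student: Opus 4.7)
The strategy is to recognize this proposition as the specialization of the abstract results of Subsection~3.1 to the compact cone $F(S)$. All the hard work has been done upstream: essentially nothing remains beyond verifying hypotheses and translating notation.

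First I would check that $C := F(S)$ fits the hypotheses of \autoref{prp:coneDualRealizeInfSup} and \autoref{prp:coneLscDual}. From the results of Robert recalled at the beginning of \autoref{coneFS}, the assumptions \axiomO{5} and \axiomO{6} on $S$ ensure that $F(S)$ is an algebraically ordered, lattice ordered, compact cone, and in particular an algebraically ordered, inf-semilattice ordered, compact cone. (Alternatively, one may combine \autoref{prp:refinementFS} with \autoref{prp:coneSemilatticeOrdered} to reach the same conclusion.)

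Applying \autoref{prp:coneLscDual} with $C = F(S)$ yields that $F(S)' \subseteq F(S)^*$ is closed under finite infima and directed suprema. Since infima in the algebraically ordered ambient cone $F(S)^*$ are given by the Riesz--Kantorovich formula \eqref{prp:MdualInj:eqInfDual}, and since the infimum of two elements of $F(S)'$ remains in $F(S)'$ by the same theorem, the infimum of $f,g \in F(S)'$ in $F(S)'$ is also computed by \eqref{prp:MdualInj:eqInfDual}. Together with closure under directed suprema this shows that $F(S)'$ is an inf-semilattice-ordered, directed complete cone, proving the first assertion.

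For the \textquotedblleft in particular\textquotedblright{} part, note that $\widehat{x}, \widehat{y} \in F(S)'$ by the observation in \autoref{coneFS}. The identity \eqref{prp:FSLscDual:eqInf} is then simply the instance of \eqref{prp:MdualInj:eqInfDual} with $f = \widehat{x}$, $g = \widehat{y}$ and $a = \lambda \in F(S)$. Finally, attainment of the infimum on the right-hand side of \eqref{prp:FSLscDual:eqInf} follows immediately from \autoref{prp:coneDualRealizeInfSup} applied to the same data. The only potential obstacle is really a cosmetic one, namely confirming that the ambient cone hypotheses align with those of the two technical lemmas; once this is checked, the proof reduces to citation.
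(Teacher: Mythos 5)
Your proposal is correct and follows exactly the route the paper takes: the paper likewise obtains this proposition by noting that \axiomO{5} and \axiomO{6} make $F(S)$ an algebraically ordered, lattice ordered, compact cone and then citing \autoref{prp:coneDualRealizeInfSup} and \autoref{prp:coneLscDual}. Nothing is missing.
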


\begin{qst}
\label{qst:FSLscDual}
Let $S$ be a countably-based \CuSgp{} satisfying \axiomO{5} and \axiomO{6}.
There is a natural semigroup morphism $\widehat{} \colon S\to F(S)'$ given by $x\mapsto\widehat{x}$.
By \autoref{prp:FSLscDual}, $F(S)'$ is inf-semilattice ordered.
Thus, if $S$ is also inf-semilattice ordered, it is natural to ask wether $\widehat{x\wedge y} = \widehat{x}\wedge\widehat{y}$, for all $x,y\in S$.
This question will be taken up in \autoref{sec:Edwards}.
\end{qst}

\subsection{Well capped cones}
\label{subsec:wellcapped}
Recall that a subset $K$ of a topological cone $C$ is called a \emph{cap} if it is compact, convex, and $C\backslash K$ is also convex.
The cone $C$ is said to be \emph{well capped} if it is the union of its caps;
see, for example, \cite{Phe01LNMChoquet}.

Let $S$ be a \CuSgp{} satisfying \axiomO{5}.
In this subsection we show that the cone $F(S)$ contains many well capped subcones;
see \autoref{lma:wellcapped}.
If $S$ is also countably based, then $F(S)$ naturally decomposes as the disjoint union of well capped, cancellative subcones.

Recall that $L(F(S))$ is defined as a certain subset of $F(S)'$, which can be identified with the sequential closure of the span of the set $\{t\widehat{x} : t\in(0,\infty),x\in S\}$ in $F(S)'$;
see \cite{Rob13Cone}.
Further, we have $L(F(S))\cong S\otimes[0,\infty]$;
see \cite[Section~7.5, p.132ff]{AntPerThi18TensorProdCu}.

Given an ideal $J$ in $S$, we let $\lambda_J\in F(S)$ denote the functional that is $0$ on $J$ and $\infty$ otherwise.
Then $2\lambda_J=\lambda_J$.
Moreover, every idempotent in $F(S)$ arises this way for some ideal, that is, $E(F(S))$ with the reverse order is naturally order-isomorphic to the lattice of ideals in $S$.

By a subcone of $F(S)$ we understand a subset that is closed under addition and multiplication by strictly positive scalars. 
Given an ideal $J$ in $S$, we set
\[
F_J(S) := \lambda_J + \big\{ \lambda\in F(S) : \lambda(x')<\infty\hbox{ whenever } x'\ll x \text{ for some } x\in J  \big\}.
\]
Then $F_J(S)$ is a subcone of $F(S)$ with apex $\lambda_J$. 
The cone $F(S)$ decomposes as the disjoint union of the subcones $F_J(S)$, with $J$ ranging over the ideals of $S$.
The \emph{support ideal} of $\lambda\in F(S)$ is the unique ideal $J$ such that $\lambda\in F_J(S)$.
One can show that the support ideal of $\lambda$ is $J$ if and only if $\varepsilon(\lambda)=\lambda_J$.

\begin{prp}
\label{lma:wellcapped}
Let $S$ be a \CuSgp{} satisfying \axiomO{5}, and let $J$ be a countably generated ideal of $S$.
Then $F_J(S)$ is well capped.
\end{prp}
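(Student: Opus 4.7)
The plan is to exhibit, for each $\lambda \in F_J(S)$, a cap $K_\lambda \subseteq F_J(S)$ containing $\lambda$. First, I would distil countable generation of $J$ into a single $\ll$-increasing sequence. If $\{z_k\}_k$ is a countable set generating $J$ as an ideal, then $w_J := \sup_n n(z_1 + \cdots + z_n)$ is idempotent, lies in $J$, and satisfies $J = \{x \in S : x \leq w_J\}$; applying \axiomO{2} to $w_J$ produces a $\ll$-increasing sequence $(y_n)_n$ in $J$ with $\sup_n y_n = w_J$. The definition of the way-below relation now supplies the cofinality property that every $x' \ll x \in J$ obeys $x' \leq y_n$ for some $n$, whence
\begin{equation*}
F_J(S) = \bigl\{\mu \in F(S) : \mu \geq \lambda_J \text{ and } \mu(y_n) < \infty \text{ for all } n\bigr\}.
\end{equation*}

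Next I would verify that $A := \{\mu \in F(S) : \mu \geq \lambda_J\}$ is closed in $F(S)$. The only nontrivial point is showing $\mu(x) = \infty$ for $x \notin J$ when $\mu$ is the limit of a net in $A$. For such $x$, use \axiomO{2} to write $x = \sup_k x_k'$ with $x_k' \ll x_{k+1}'$; since $J$ is closed under suprema of increasing sequences, some $x_{k_0}'$ must lie outside $J$, so $\mu_j(x_{k_0}') = \infty$ for every $\mu_j \in A$, and the definition of convergence in $F(S)$ forces $\mu(x) \geq \limsup_j \mu_j(x_{k_0}') = \infty$. With this in hand, for each $\lambda \in F_J(S)$ I would define
\begin{equation*}
K_\lambda := \Bigl\{\mu \in F(S) : \mu \geq \lambda_J \text{ and } \sum_{n=1}^\infty \frac{\mu(y_n)}{2^n\bigl(\lambda(y_n)+1\bigr)} \leq 2\Bigr\}.
\end{equation*}
Each evaluation $\mu \mapsto \mu(y_n)$ is lower semicontinuous on $F(S)$, so the weighted sum is lower semicontinuous (as a supremum of lower-semicontinuous partial sums), and $K_\lambda$ is the intersection of $A$ with a closed sublevel set; hence $K_\lambda$ is closed in the compact cone $F(S)$, and therefore compact. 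Convexity of $K_\lambda$ and of $F_J(S)\setminus K_\lambda$ both follow from the linearity of the defining inequality. Any $\mu \in K_\lambda$ satisfies $\mu(y_n) \leq 2^{n+1}(\lambda(y_n)+1) < \infty$ for every $n$, so the characterisation above yields $K_\lambda \subseteq F_J(S)$, and $\lambda \in K_\lambda$ because $\sum_n 2^{-n}\lambda(y_n)/(\lambda(y_n)+1) \leq \sum_n 2^{-n} = 1 \leq 2$.

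The main obstacle is choosing the right cap geometry. A single bound $\mu(y_n) \leq M$ does not control the other $\mu(y_m)$'s and so fails to place $\mu$ in $F_J(S)$, while a countable family of bounds $\mu(y_n) \leq M_n$ controls $\mu$ on all of $J$ at the cost of destroying convexity of the complement in $F_J(S)$. Bundling all the bounds into one weighted-sum inequality produces a single linear condition that simultaneously lands the cap inside $F_J(S)$ and keeps its complement convex.
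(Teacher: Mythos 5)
Your proof is correct and follows essentially the same route as the paper's: both take a $\ll$-increasing sequence with supremum the largest element of $J$, form the linear lower semicontinuous functional $f=\sum_n\alpha_n\widehat{y_n}$ with weights decaying fast enough that $f(\lambda)$ is finite, and take the sublevel set of $f$ as the cap, using $f(\mu)<\infty \Rightarrow \mu(y_n)<\infty$ for all $n$ to land the cap inside $F_J(S)$. The only cosmetic difference is in how compactness is packaged (you intersect with the closed set $\{\mu\geq\lambda_J\}$, while the paper writes the cap as $\lambda_J+\{\mu: f(\mu)\leq 1\}$), which changes nothing of substance.
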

\begin{proof}
Since $J$ is countably generated, it contains a largest element;
see \cite[Paragraph~5.1.6, p.39f]{AntPerThi18TensorProdCu}, and also the comments before \autoref{prp:infWithIdeal}.
Choose a $\ll$-increasing sequence $(x_n)_n$ whose supremum is the largest element of $J$.
Let $\lambda\in F_J(S)$.
Then $(\lambda(x_n))_n$ is an increasing sequence in $[0,\infty)$.
Define 
\[
f = \sum_{n=1}^\infty \alpha_n\widehat{x_n} \in L(F(S)),
\]
where we choose the numbers $(\alpha_n)_n$ in $(0,\infty)$ such that $\alpha_n\to 0$ fast enough so that $f(\lambda)\leq 1$.
Observe that $\widehat{x}\leq \infty f$ for any $x\in J$.

We consider
\[
C_f := \big\{ \mu\in F_J(S) : f(\mu)\leq 1 \big\},
\]
which contains $\lambda$.
Let us show that $C_f$ is a cap of $F_J(S)$.
Since $f$ is linear, both $C_f$ and its complement in $F_J(S)$ are convex.
It remains to show that $C_f$ is compact.

We show first that if $\mu\in F(S)$ is such that $f(\mu)\leq 1$ then $\lambda_J+\mu\in F_J(S)$.
Let $x'\ll x$ in $J$.
Using \cite[Lemma~2.2.5]{Rob13Cone} at the first step, we get
\[
\widehat{x'} \ll 2\widehat{x} \leq \infty f.
\]
Hence, $\widehat{x'}\leq Nf$ for some $N\in\NN$.
Then $\mu(x')\leq N<\infty$, which in turn implies that $\lambda_J+\mu\in F_J(S)$.
Thus, $C_f$ agrees with $\lambda_J+\{\mu\in F(S):f(\mu)\leq 1\}$.
This set is closed in $F(S)$ and therefore compact.
\end{proof}

\section{Edwards' condition for abstract Cuntz semigroups}
\label{sec:Edwards}

In this section we introduce Edwards' condition for \CuSgp{s}; see \autoref{dfn:Edwards}.
This condition is inspired by a property considered by Edwards \cite[Condition~(2)]{Edw69UniformApproxAff}, and it has been studied in a more restrictive setting in \cite{Thi17arX:RksOps}.
In \autoref{prp:CuA-Edwards} below we show that Edwards' condition is satisfied by Cuntz semigroups of general \ca{s}.

\begin{dfn}
\label{dfn:Edwards}
Let $S$ be a \CuSgp{} and let $\lambda\in F(S)$.
We say that $S$ satisfies \emph{Edwards' condition for $\lambda$} if
\begin{align}
\label{dfn:Edwards:eq}
\inf\big\{ \lambda_1(x)+\lambda_2(y) \colon \lambda=\lambda_1+\lambda_2 \big\}
= \sup \big\{ \lambda(z) \colon z\leq x,y \big\},
\end{align}
for all $x,y\in S$.
If this holds for all $\lambda\in F(S)$, then we say that $S$ satisfies \emph{Edwards' condition}.
\end{dfn}

\begin{rmk}
\label{rmk:Edwards}
Let $S$ be a \CuSgp{} satisfying \axiomO{5} and \axiomO{6}.
It follows from \autoref{prp:coneDualRealizeInfSup} that the infimum in \eqref{dfn:Edwards:eq} is attained (see also the remarks before \autoref{prp:refinementFS}).
Further, it follows from \autoref{prp:FSLscDual} that the left hand side in \eqref{dfn:Edwards:eq} agrees with $(\widehat{x}\wedge\widehat{y})(\lambda)$.
Thus, $S$ satisfies Edwards' condition for $\lambda$ if and only if
\begin{align}
\label{dfn:equivEdwards}
(\widehat{x}\wedge\widehat{y})(\lambda)
= \sup \big\{ \lambda(z) :  z\leq x,y \big\},
\end{align}
for all $x,y\in S$.
Notice that the inequality `$\geq$' always holds.

If $S$ is also an inf-semilattice, then we have $\sup\{\lambda(z) : z\leq x,y\}=\widehat{x\wedge y}(\lambda)$.
Therefore, in this setting, Edwards' condition is equivalent to
\[
\widehat{x}\wedge\widehat{y}=\widehat{x\wedge y},
\]
for all $x,y\in S$. (See \autoref{qst:FSLscDual}.)
\end{rmk}

Next, we show that the supremum in \eqref{dfn:Edwards:eq} is achieved.
We first need a lemma.

\begin{lma}
\label{prp:improveEC}
Let $S$ be a \CuSgp{} satisfying \axiomO{5} and \axiomO{6}, let $\lambda\in F(S)$ such that $S$ satisfies Edwards' condition for $\lambda$, let $z'\ll z\leq x,y$ in $S$, and let $t\in\RR$ satisfy $t<(\widehat{x}\wedge\widehat{y})(\lambda)$.
Then there exists $\tilde{z}\in S$ such that $z'\ll \tilde{z}\leq x,y$ and $t<\lambda(\tilde{z})$.
\end{lma}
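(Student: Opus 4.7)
The plan is to construct $\tilde{z}$ in the form $z'' + v$, where $z''$ is an interpolant with $z' \ll z'' \ll z$ and $v$ is produced by applying Edwards' condition to a pair of `remainders'. First, observe that if $\lambda(z) > t$, then $\tilde{z} := z$ already satisfies the conclusion, so I may assume $\lambda(z) \leq t$; in particular $\lambda(z) < \infty$. Since $(\widehat{x} \wedge \widehat{y})(\lambda) > t$, fix $\delta > 0$ with $t + \delta < (\widehat{x} \wedge \widehat{y})(\lambda)$, and use \axiomO{2} together with the fact that $\lambda$ preserves suprema of $\ll$-increasing sequences to choose $z''$ with $z' \ll z'' \ll z$ and $\lambda(z) - \lambda(z'') < \delta$.

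Next, I would apply the weak form of \axiomO{5} to $z'' \ll z \leq x$ and to $z'' \ll z \leq y$, producing $u_1, u_2 \in S$ such that $z'' + u_1 \leq x \leq z + u_1$ and $z'' + u_2 \leq y \leq z + u_2$. The rightmost inequalities give $\widehat{x} \leq \widehat{z} + \widehat{u_1}$ and $\widehat{y} \leq \widehat{z} + \widehat{u_2}$ in $F(S)'$. Since $F(S)'$ is inf-semilattice ordered with addition distributing over $\wedge$ (combining \autoref{prp:coneLscDual} with \autoref{prp:FSLscDual}), we obtain
\[
\widehat{x} \wedge \widehat{y} \leq (\widehat{z} + \widehat{u_1}) \wedge (\widehat{z} + \widehat{u_2}) = \widehat{z} + (\widehat{u_1} \wedge \widehat{u_2}).
\]
Evaluating at $\lambda$ and using $\lambda(z) < \infty$ to subtract yields
\[
(\widehat{u_1} \wedge \widehat{u_2})(\lambda) \geq (\widehat{x} \wedge \widehat{y})(\lambda) - \lambda(z) > t + \delta - \lambda(z) > t - \lambda(z''),
\]
where the final inequality uses $\lambda(z) - \lambda(z'') < \delta$.

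To finish, Edwards' condition for $\lambda$ applied to the pair $u_1, u_2$ produces $v \in S$ with $v \leq u_1, u_2$ and $\lambda(v) > t - \lambda(z'')$. Setting $\tilde{z} := z'' + v$, the inequalities $\tilde{z} \leq z'' + u_1 \leq x$ and $\tilde{z} \leq z'' + u_2 \leq y$ are immediate; the way-below relation $z' \ll z''$ combined with $z'' \leq \tilde{z}$ gives $z' \ll \tilde{z}$; and $\lambda(\tilde{z}) = \lambda(z'') + \lambda(v) > t$, as required.

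The main obstacle is the bookkeeping around $\lambda(z'')$: the distributivity inequality naturally gives a lower bound on $(\widehat{u_1} \wedge \widehat{u_2})(\lambda)$ involving $\lambda(z)$, while the target value $\lambda(\tilde{z}) = \lambda(z'') + \lambda(v)$ involves $\lambda(z'')$. Closing this gap is precisely why the interpolant $z''$ must be chosen so that $\lambda(z'')$ is within $\delta$ of $\lambda(z)$, which in turn relies on the sequential continuity of $\lambda$.
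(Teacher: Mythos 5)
Your argument is correct and follows essentially the same route as the paper's proof: interpolate $z''$ with $z'\ll z''\ll z$ controlling $\lambda(z'')$, apply \axiomO{5} twice to produce the remainders, use the distributivity $\widehat{z}+(\widehat{u_1}\wedge\widehat{u_2})=(\widehat{z}+\widehat{u_1})\wedge(\widehat{z}+\widehat{u_2})$ in $F(S)'$, and invoke Edwards' condition on the remainders to build $\tilde z=z''+v$. The only (harmless) difference is that by tracking the threshold $t-\lambda(z'')$ directly you avoid the paper's separate treatment of the cases $(\widehat{x}\wedge\widehat{y})(\lambda)<\infty$ and $=\infty$.
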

\begin{proof}
If $t<\lambda(z)$, then we can set $\tilde{z}:=z$.
Thus, we may assume that $\lambda(z)\leq t$, and in particular $\lambda(z)$ is finite.
We distinguish two cases.

Case~1.
Assume that $(\widehat{x}\wedge\widehat{y})(\lambda)<\infty$. 
In this case, choose $\varepsilon>0$ such that $t+\varepsilon< (\widehat{x}\wedge\widehat{y})(\lambda)$.
Since $\lambda(z)<\infty$, we can choose $z''\in S$ such that
\[
z'\ll z''\ll z, \andSep 
\lambda(z) < \lambda(z'')+\varepsilon/2.
\]
Applying \axiomO{5} to $z''\ll z\leq x$ and $z''\ll z\leq y$, we obtain $u,v\in S$ such that
\begin{align*}
z''+u \leq  x\leq z+u, \andSep
z''+v \leq  y\leq  z+v.
\end{align*}
Since $(\widehat{u}\wedge\widehat{v})(\lambda)<\infty$, we can apply Edwards' condition to obtain $w\in S$ such that
\[
w\leq u,v, \andSep
(\widehat{u}\wedge\widehat{v})(\lambda) \leq \lambda(w)+\varepsilon/2.
\]	
Set $\tilde{z} := z''+w$.
Then $z'\ll \tilde z\leq x,y$.
Using that $F(S)'$ is semilattice-ordered (\autoref{prp:FSLscDual}) at the second step, we deduce
\begin{align*}
(\widehat{x}\wedge\widehat{y})(\lambda)
&\leq (\widehat{z+u}\wedge \widehat{z+v})(\lambda) \\
&= \widehat{z}(\lambda) + (\widehat{u}\wedge\widehat{v})(\lambda) \\
&\leq \lambda(z'')+\varepsilon/2+\lambda(w)+\varepsilon/2 \\
&= \lambda(\tilde{z}) + \varepsilon,
\end{align*}
which implies $t<\lambda(\tilde{z})$.
This proves this case of the lemma.

Case~2.
Suppose that $(\widehat{x}\wedge\widehat{y})(\lambda)=\infty$.
Choose $z''\in S$ satisfying $z'\ll z''\ll z$.
Construct $u$ and $v$ as in case~1.
Then
\[
\infty 
= (\widehat{x}\wedge\widehat{y})(\lambda)
\leq (\widehat{z+u}\wedge \widehat{z+v})(\lambda) \\
= \widehat{z}(\lambda) + (\widehat{u}\wedge\widehat{v})(\lambda),
\]
which implies that $(\widehat{u}\wedge\widehat{v})(\lambda)=\infty$.
Applying Edwards' condition, we obtain $w\in S$ such that $w\leq u,v$ and $t<\lambda(w)$.
Then, as in Step~1, the element $\tilde{z} := z''+w$ has the desired properties.
\end{proof}

\begin{thm}
Let $S$ be a \CuSgp{} satisfying \axiomO{5} and \axiomO{6}, let $\lambda\in F(S)$ such that $S$ satisfies Edwards' condition for $\lambda$, and let $x,y\in S$. 
Then there exists $z\in S$ such that $z\leq x,y$ and
\[
(\widehat{x}\wedge\widehat{y})(\lambda)
= \lambda(z).
\]
Moreover, given also $z_0',z_0\in S$ with $z_0'\ll z_0\leq x,y$, the element $z$ may be chosen such that $z_0'\ll z$.
\end{thm}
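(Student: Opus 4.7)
The plan is to iterate \autoref{prp:improveEC} in order to build a $\ll$-increasing sequence $(z_n')_{n\geq 0}$ of elements dominated by both $x$ and $y$, whose supremum will be the desired $z$. Write $L:=(\widehat{x}\wedge\widehat{y})(\lambda)$ and fix a sequence $(t_n)_{n\geq 1}$ in $\RR$ strictly increasing to $L$ (with $t_n\to\infty$ when $L=\infty$). Starting from the given pair $z_0'\ll z_0\leq x,y$, I will inductively construct pairs $z_n'\ll z_n\leq x,y$ for $n\geq 1$ such that $\lambda(z_n')>t_n$ and $z_{n-1}'\ll z_n'$.

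For the inductive step, assume that $z_n'\ll z_n\leq x,y$ has been built. Applying \autoref{prp:improveEC} to the triple $(z_n',z_n,t_{n+1})$ yields $z_{n+1}\in S$ with $z_n'\ll z_{n+1}\leq x,y$ and $\lambda(z_{n+1})>t_{n+1}$. By \axiomO{2} write $z_{n+1}=\sup_k z_{n+1}^{(k)}$ for a $\ll$-increasing sequence. Since $z_n'\ll z_{n+1}$ there is some $k_0$ with $z_n'\leq z_{n+1}^{(k_0)}$, and since $\lambda$ preserves suprema of increasing sequences and $\lambda(z_{n+1})>t_{n+1}$, there is some $k_1$ with $\lambda(z_{n+1}^{(k_1)})>t_{n+1}$. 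Setting $z_{n+1}':=z_{n+1}^{(K)}$ for any $K>\max(k_0,k_1)$ then yields the required properties: $z_n'\ll z_{n+1}'$ (because $z_n'\leq z_{n+1}^{(k_0)}\ll z_{n+1}^{(K)}$), $z_{n+1}'\ll z_{n+1}$, and $\lambda(z_{n+1}')>t_{n+1}$.

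Now set $z:=\sup_n z_n'$, which exists by \axiomO{1}. Since $z_n'\leq z_n\leq x,y$ for every $n$, the supremum satisfies $z\leq x,y$. The preservation of sequential suprema by $\lambda$ gives $\lambda(z)=\sup_n\lambda(z_n')\geq \sup_n t_n=L$, while Edwards' condition for $\lambda$ in the form \eqref{dfn:equivEdwards} gives $\lambda(z)\leq L$ (since $z\leq x,y$). Hence $\lambda(z)=L$. The moreover clause is built in from the start: the first inductive step yields $z_0'\ll z_1'$, and $z_1'\leq z$, so $z_0'\ll z$.

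The main subtlety is not \autoref{prp:improveEC} itself, which does the work of increasing the value of $\lambda$, but the transition from its output $z_{n+1}$ to a $\ll$-predecessor $z_{n+1}'$ that still lies above $z_n'$ and retains a large enough $\lambda$-value. This step, made possible by \axiomO{2}, is what ensures that the sequence $(z_n')_n$ is actually $\ll$-increasing and therefore admits a supremum $z$ that both is bounded by $x$ and $y$ and inherits the correct $\lambda$-value in the limit.
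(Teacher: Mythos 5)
Your proposal is correct and follows essentially the same route as the paper: iterate \autoref{prp:improveEC} along a sequence $t_n\nearrow(\widehat{x}\wedge\widehat{y})(\lambda)$ to build a $\ll$-increasing sequence $(z_n')_n$ below $x$ and $y$ with $\lambda(z_n')>t_n$, and take $z:=\sup_n z_n'$. Your explicit \axiomO{2}-interpolation step producing $z_{n+1}'$ with $z_n'\ll z_{n+1}'\ll z_{n+1}$ and $\lambda(z_{n+1}')>t_{n+1}$ is exactly the (implicit) choice made in the paper's proof.
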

\begin{proof}
Let $(t_n)_n$ be a strictly increasing sequence in $\RR$ with $\sup_n t_n = (\widehat{x}\wedge\widehat{y})(\lambda)$.
If $z_0'$ and $z_0$ are not given, we simply consider $z_0'=0$ and $z_0=0$.
We inductively construct $z_n',z_n\in S$ for $n\geq 1$ such that
\[
z_{n-1}' \ll z_n'\ll z_n \leq x,y, \andSep
t_n < \lambda(z_n'),
\]
for $n\geq 1$.

Given $n\geq 1$, assume that $z_{n-1}',z_{n-1}$ with $z_{n-1}'\ll z_{n-1}\leq x,y$ have been chosen.
Using \autoref{prp:improveEC}, we obtain $z_n\in S$ such that
\[
z_{n-1}'\ll z_n\leq x,y, \andSep
t_n< \lambda(z_n).
\]
Choose $z_n'\in S$ such that
\[
z_{n-1}'\ll z_n'\ll z_n, \andSep t_n < \lambda(z_n').
\]
Then $z_n'$ and $z_n$ have the claimed properties.

We obtain a $\ll$-increasing sequence $(z_n')_n$, which allows us to set $z:=\sup_n z_n'$.
Then $z_0'\ll z\leq x,y$ and
\[
(\widehat{x}\wedge\widehat{y})(\lambda)
= \sup_n t_n 
\leq \sup_n \lambda(z_n')
= \lambda(z),
\]
which implies that $z$ has the desired properties.
\end{proof}

\begin{cor}
\label{prp:exactEC}
Let $S$ be a \CuSgp{} satisfying \axiomO{5} and \axiomO{6} and Edwards' condition.
Then for every $\lambda\in F(S)$ and $x,y\in S$, there exist $\lambda_1,\lambda_2\in F(S)$ and $z\in S$ such that
\[
\lambda = \lambda_1+\lambda_2,\qquad
z \leq x,y, \andSep
\lambda_1(x)+\lambda_2(y) = \lambda(z).
\]
\end{cor}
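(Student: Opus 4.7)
The plan is to combine the preceding theorem (which gives attainment of the supremum on the right-hand side of Edwards' condition) with \autoref{prp:coneDualRealizeInfSup} (which gives attainment of the infimum on the left-hand side). Both attained values coincide with $(\widehat{x}\wedge\widehat{y})(\lambda)$, so the two witnesses automatically satisfy the claimed equality.

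More concretely, first I would invoke the theorem immediately preceding this corollary: given $\lambda\in F(S)$, $x,y\in S$, and with $z_0'=z_0=0$, it produces an element $z\in S$ with $z\leq x,y$ and $\lambda(z)=(\widehat{x}\wedge\widehat{y})(\lambda)$. Next, since $S$ satisfies \axiomO{5} and \axiomO{6}, by the discussion before \autoref{prp:refinementFS} (and by \autoref{prp:refinementFS} itself) the cone $F(S)$ is an algebraically ordered, inf-semilattice ordered, compact cone, and the functions $\widehat{x},\widehat{y}$ lie in $F(S)'$. Therefore \autoref{prp:coneDualRealizeInfSup} applies to $f=\widehat{x}$, $g=\widehat{y}$ and the point $\lambda\in F(S)$: it yields a decomposition $\lambda=\lambda_1+\lambda_2$ in $F(S)$ realizing the infimum in the Riesz-Kantorovich formula \eqref{prp:MdualInj:eqInfDual}, that is,
\[
(\widehat{x}\wedge\widehat{y})(\lambda)=\widehat{x}(\lambda_1)+\widehat{y}(\lambda_2)=\lambda_1(x)+\lambda_2(y).
\]

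Finally, combining the two displayed equalities gives $\lambda_1(x)+\lambda_2(y)=\lambda(z)$, which is exactly the content of the corollary. The only minor subtlety worth flagging is to confirm that the two uses of the quantity $(\widehat{x}\wedge\widehat{y})(\lambda)$ refer to the same object: this is the content of \autoref{rmk:Edwards}, which identifies the left-hand side of \eqref{dfn:Edwards:eq} with $(\widehat{x}\wedge\widehat{y})(\lambda)$ using \autoref{prp:FSLscDual}. There is no substantial obstacle; the work was already done in the preceding theorem and in \autoref{prp:coneDualRealizeInfSup}, and this corollary is a clean packaging of the two attainment statements.
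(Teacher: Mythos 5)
Your proof is correct and is exactly the intended argument: the corollary is stated in the paper without proof precisely because it is the combination of the preceding theorem (attainment of the supremum by some $z\leq x,y$) with \autoref{prp:coneDualRealizeInfSup} via \autoref{prp:FSLscDual} (attainment of the infimum by some decomposition $\lambda=\lambda_1+\lambda_2$), both equal to $(\widehat{x}\wedge\widehat{y})(\lambda)$. Your flagging of the identification of the two quantities via \autoref{rmk:Edwards} is the right point to check, and it holds.
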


The following result can be interpreted as the fact that the Edwards' condition implies that its dual version is also satisfied.
It is not clear wether or not these  conditions are actually equivalent.

\begin{prp}
\label{prp:dualEdwarda}
Let $S$ be a \CuSgp{} satisfying \axiomO{5} and \axiomO{6}.
Let $\lambda\in F(S)$ be such that $S$ satisfies Edwards' condition for $\lambda$.
Then
\begin{align}
\label{prp:dualEdwarda:eq1}
\sup \big\{ \lambda_1(x)+\lambda_2(y) : \lambda_1+\lambda_2=\lambda \big\} 
= \inf\big\{ \lambda(a) \colon x,y\leq a \big\},
\end{align}
for all $x,y\in S$.
\end{prp}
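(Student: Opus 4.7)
The plan is to prove the nontrivial inequality $\geq$ in \eqref{prp:dualEdwarda:eq1}; the reverse is immediate, since for any decomposition $\lambda = \lambda_1+\lambda_2$ and any $a\geq x,y$ one has $\lambda_1(x)+\lambda_2(y) \leq \lambda_1(a)+\lambda_2(a) = \lambda(a)$. I would first dispose of the case where $\lambda(x)=\infty$ or $\lambda(y)=\infty$, in which both sides are $\infty$: the decomposition $\lambda_1=\lambda$, $\lambda_2=0$ makes the supremum infinite, while any $a\geq x$ forces $\lambda(a)\geq\lambda(x)$. So from now on I may assume $\lambda(x),\lambda(y)<\infty$.

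The core observation is the algebraic identity $(\mu_1(x)+\mu_2(y))+(\mu_2(x)+\mu_1(y))=\lambda(x)+\lambda(y)$, valid for every decomposition $\mu_1+\mu_2=\lambda$. Applying \autoref{prp:exactEC}, I obtain such a decomposition together with $z\in S$ satisfying $z\leq x,y$ and $\mu_1(x)+\mu_2(y)=\lambda(z)=(\widehat{x}\wedge\widehat{y})(\lambda)$. The identity then forces $\mu_2(x)+\mu_1(y)=\lambda(x)+\lambda(y)-\lambda(z)$, and since $(\mu_2,\mu_1)$ is again a decomposition of $\lambda$, this value is bounded above by the supremum in \eqref{prp:dualEdwarda:eq1}. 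It therefore suffices to produce, for each $\varepsilon>0$, an upper bound $a\geq x,y$ with $\lambda(a)<\lambda(x)+\lambda(y)-\lambda(z)+\varepsilon$.

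To construct such an $a$, I would pick $z'\ll z$ with $\lambda(z')>\lambda(z)-\varepsilon/2$, which is possible since $\lambda$ preserves suprema of increasing sequences and $\lambda(z)$ is finite. Two applications of the weak form of \axiomO{5}, to $z'\ll z\leq x$ and to $z'\ll z\leq y$, yield $u,v\in S$ with $z'+u\leq x\leq z+u$ and $z'+v\leq y\leq z+v$. Setting $a:=z+u+v$, both $x\leq z+u\leq a$ and $y\leq z+v\leq a$ hold, while the estimates $\lambda(u)\leq\lambda(x)-\lambda(z')$ and $\lambda(v)\leq\lambda(y)-\lambda(z')$ give $\lambda(a)=\lambda(z)+\lambda(u)+\lambda(v)\leq\lambda(x)+\lambda(y)+\lambda(z)-2\lambda(z')<\lambda(x)+\lambda(y)-\lambda(z)+\varepsilon$, as required.

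The main obstacle is that the infimum in \eqref{prp:dualEdwarda:eq1} ranges over upper bounds of $x,y$ which need not exist as a true join in $S$. The idea is to manufacture them from a nearly optimal dominated element $z$ via \axiomO{5}, trading the error between $\lambda(z)$ and $\lambda(z')$ for control of $\lambda(a)$. The reduction to the finite case is essential, as the computation involves differences like $\lambda(x)-\lambda(z')$ that do not make sense in $[0,\infty]$.
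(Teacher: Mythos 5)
Your proof is correct and follows essentially the same route as the paper's: both reduce to the finite case, invoke \autoref{prp:exactEC} to get an optimal $z\leq x,y$ with $\lambda(z)=\mu_1(x)+\mu_2(y)$, build the upper bound $a=z+u+v$ via two applications of \axiomO{5} to an approximant $z'\ll z$, and use the swapped decomposition $(\mu_2,\mu_1)$ as the witness for the supremum. The only difference is cosmetic: you compute $\mu_2(x)+\mu_1(y)=\lambda(x)+\lambda(y)-\lambda(z)$ explicitly, whereas the paper arranges the same estimate by adding $\mu_1(x)+\mu_2(y)$ to both sides and cancelling.
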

\begin{proof}
The inequality `$\leq$' in \eqref{prp:dualEdwarda:eq1} is straightforward to obtain. Let us show the opposite inequality.
Let $r$ denote the left side.
If $r=\infty$, we are done.
Let us thus suppose that $r<\infty$.
Observe that this implies that $\lambda(x)<\infty$ and $\lambda(y)<\infty$.

Applying \autoref{prp:exactEC}, we obtain $\lambda_1,\lambda_2\in F(S)$ and $z\in S$ such that
\[
\lambda = \lambda_1+\lambda_2, \andSep
z \leq x,y, \andSep
\lambda_1(x)+\lambda_2(y) = \lambda(z).
\]

Let $\varepsilon>0$.
Since $\lambda(z)$ is finite, we can choose $z'\ll z$ such that $\lambda(z)\leq\lambda(z')+\varepsilon$.
Applying \axiomO{5} for $z'\ll z\leq x$ and $z'\ll z\leq y$, we obtain $u,v\in S$ such that
\[
z'+u \leq x \leq z+u, \andSep 
z'+v \leq y \leq z+v.
\]
Set $a:=z+u+v$ which clearly satisfies $x,y\leq a$.
Then
\begin{align*}
\lambda(a)+\lambda_1(x)+\lambda_2(y)
&= \lambda(a)+\lambda(z) \\
&= \lambda(z)+\lambda(u)+\lambda(z)+\lambda(v) \\
& \leq \lambda(z')+\lambda(u)+\lambda(z')+\lambda(v)+2\varepsilon \\
& \leq \lambda(x)+\lambda(y)+2\varepsilon \\
&= \lambda_1(x)+\lambda_2(x)+\lambda_1(y)+\lambda_2(y)+2\varepsilon.
\end{align*}
Since $\lambda_1(x)$ and $\lambda_2(y)$ are finite, we may cancel them and obtain
\[
\lambda(a)
\leq \lambda_2(x)+\lambda_1(y)+2\epsilon\leq r+2\varepsilon,
\]
which implies the desired inequality.
\end{proof}

The following result shows that to prove Edwards' condition for $S$, it suffices to deal with the case where the functional $\lambda$ is finite on the given elements $x,y\in S$.
This reduction will come in handy when we prove Edwards' condition for Cuntz semigroups of C*-algebras.

\begin{thm}
\label{prp:EC-reduction}
Let $S$ be a \CuSgp{} satisfying \axiomO{5}, \axiomO{6} and \axiomO{7}, and let $\lambda\in F(S)$. Then $S$ satisfies Edwards' condition for $\lambda$ if, for all $x,y\in S$ with $\lambda(x),\lambda(y)<\infty$, we have
\[
(\widehat{x}\wedge\widehat{y})(\lambda)
\leq \sup \big\{ \lambda(z) : z\leq x,y \big\}.
\]
\end{thm}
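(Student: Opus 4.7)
The plan is to reduce the general case to the hypothesis by two successive approximations. Assume $M := \sup\{\lambda(z) : z\leq x,y\} < \infty$, for otherwise the inequality is trivial.

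Let $I_\lambda \subseteq S$ denote the ideal of elements $u$ such that $\lambda(u') < \infty$ for every $u'\ll u$; equivalently, $I_\lambda$ consists of those elements that are suprema of increasing sequences of $\lambda$-finite elements (one checks this is an ideal using that $\lambda$ is additive and continuous with respect to sequential suprema). First I would extend the hypothesis from pairs with $\lambda(x),\lambda(y)<\infty$ to pairs $(u,v)\in I_\lambda\times I_\lambda$: given such $u,v$, choose $\ll$-increasing sequences $u_n\nearrow u$ and $v_n\nearrow v$ with $\lambda(u_n),\lambda(v_n)<\infty$, apply the hypothesis to each $(u_n,v_n)$ to obtain $(\widehat{u_n}\wedge\widehat{v_n})(\lambda) \leq \sup\{\lambda(z):z\leq u_n,v_n\} \leq \sup\{\lambda(z):z\leq u,v\}$, and pass to the supremum in $n$ using the commutativity of finite infima with directed suprema in $F(S)'$, namely \eqref{prp:coneLscDual:eqInfDirectSup} of \autoref{prp:coneLscDual}. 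This gives Edwards' condition for arbitrary $(u,v)\in I_\lambda\times I_\lambda$.

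Next I would reduce from $(x,y)$ to a pair in $I_\lambda$. By \autoref{prp:infWithIdealDirected}, which is where \axiomO{7} is used, the set $\{u\in I_\lambda:u\leq x\}$ is upward directed; let $x^*$ denote either its supremum (when this exists) or a cofinal approximating sequence, and define $y^*$ analogously. Since $M<\infty$, every common lower bound of $x$ and $y$ has finite $\lambda$-value and hence lies in $I_\lambda$; consequently it lies below both $x^*$ and $y^*$, which implies $\sup\{\lambda(z):z\leq x,y\}=\sup\{\lambda(z):z\leq x^*,y^*\}$. The first step applied to $(x^*,y^*)$ therefore yields $(\widehat{x^*}\wedge\widehat{y^*})(\lambda)\leq M$. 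To conclude, it remains to establish the equality $(\widehat{x}\wedge\widehat{y})(\lambda) = (\widehat{x^*}\wedge\widehat{y^*})(\lambda)$, of which the direction $\geq$ is immediate from monotonicity.

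The hard part will be the reverse direction $(\widehat{x^*}\wedge\widehat{y^*})(\lambda)\geq(\widehat{x}\wedge\widehat{y})(\lambda)$. To obtain this I plan to invoke \autoref{prp:FSLscDual} to realize the infimum as $(\widehat{x}\wedge\widehat{y})(\lambda)=\lambda_1(x)+\lambda_2(y)$ for a decomposition $\lambda=\lambda_1+\lambda_2$ (with both summands finite under our standing assumption $M<\infty$), and then prove that $\lambda_1(x)=\lambda_1(x^*)$ and $\lambda_2(y)=\lambda_2(y^*)$. This equality of masses should follow from a further, more delicate use of \axiomO{7}: given any $\ll$-approximant of $x$ with $\lambda_1$-mass close to $\lambda_1(x)$, \axiomO{7} should allow one to lift it to a nearby element of $I_\lambda$ still below $x$ and with essentially the same $\lambda_1$-value, so that no $\lambda_1$-mass of $x$ is lost upon projecting to $x^*$. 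Making this lifting argument precise — in particular, controlling the ``$\lambda_2$-infinite part'' of $x$ that must be excised — is the main technical challenge.
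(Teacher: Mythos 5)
Your outer scaffolding matches the paper's: the final step (extend the hypothesis from $\lambda$-finite pairs to pairs in the ideal $I_\lambda=\{u:\lambda(u')<\infty\text{ for all }u'\ll u\}$ by taking $\ll$-increasing approximants and using that infima commute with directed suprema in $F(S)'$) and the observation that, when $M<\infty$, every common lower bound of $x$ and $y$ lies in $I_\lambda$, are both exactly what the paper does. But the heart of the theorem is the step you explicitly defer as ``the main technical challenge'': showing that $(\widehat{x}\wedge\widehat{y})(\lambda)$ is not decreased when $x,y$ are replaced by the (suprema of the) directed families of lower bounds lying in the relevant ideal. Without that, nothing is proved. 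Moreover, your proposed route to it has two concrete problems. First, the claim that the realized decomposition $\lambda=\lambda_1+\lambda_2$ has $\lambda_1(x),\lambda_2(y)<\infty$ ``under our standing assumption $M<\infty$'' is circular: $M$ bounds the right-hand side of Edwards' condition, whereas $\lambda_1(x)+\lambda_2(y)=(\widehat{x}\wedge\widehat{y})(\lambda)$ is the left-hand side, and its finiteness is precisely what is at stake. Second, cutting down by $I_\lambda$ directly is the wrong ideal for making the infimum computation work: the mechanism the paper uses is the trivial inequality $\widehat{y}\leq h_{\langle y\rangle}$ (where $h_J$ is the indicator-type functional of an ideal $J$), which lets one write $\widehat{x}\wedge\widehat{y}=(\widehat{x}\wedge h_{\langle y\rangle})\wedge\widehat{y}$ and then invoke the identity $(\widehat{x}\wedge h_J)(\mu)=\sup\{\mu(z):z\in J,\,z\leq x\}$ (Lemma~\ref{prp:hatWedgeIdl}, proved via the auxiliary functional $\lambda^{(J)}=\inf\{\mu:\lambda\leq\mu+\lambda_J\}$). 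The analogous inequality $\widehat{y}\leq h_{I_\lambda}$ is false in general, since $y$ need not belong to $I_\lambda$, so your version of the ``no mass lost'' claim cannot be obtained this way and your alternative (proving $\lambda_1(x)=\lambda_1(x^*)$ by an unspecified lifting argument) is not substantiated.

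For comparison: the paper first cuts down by $\langle y\rangle$ and $\langle x\rangle$ to obtain $r\leq x$, $s\leq y$ with $r,s\in\langle x\rangle\cap\langle y\rangle$ and $(\widehat{x}\wedge\widehat{y})(\lambda)=(\widehat{r}\wedge\widehat{s})(\lambda)$ (Lemma~\ref{prp:hatInfIntoIdls}, where \axiomO{7} enters through \autoref{prp:infWithIdealDirected}), and only then uses the separate lemma that $\langle x\rangle\cap\langle y\rangle$ is generated by the common lower bounds of $x$ and $y$, together with $M<\infty$, to conclude $r,s\in I_\lambda$. Your proposal would become a proof if you replaced your $x^*,y^*$ construction by this two-ideal argument, but as written the decisive inequality $(\widehat{x}\wedge\widehat{y})(\lambda)\leq(\widehat{x^*}\wedge\widehat{y^*})(\lambda)$ is unproved.
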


We will prove the theorem using a series of lemmas.

Let $S$ be a \CuSgp{} and let $J\subseteq S$ be an ideal.
We define $\lambda_J\colon S\to[0,\infty]$ as in \autoref{subsec:wellcapped}, and $h_J\colon F(S)\to[0,\infty]$ as follows:
\begin{align*}
h_J(\lambda) = \begin{cases}
	0 & \text{if }\lambda\leq\lambda_J \\
	\infty & \text{if }\lambda\nleq\lambda_J
\end{cases}.
\end{align*}
Observe that, if $J$ has a largest element $w_J$ (for example, if $J$ is countably based), then $h_J=\widehat{w_J}$.

%
%
%

\begin{lma}
\label{prp:IdealEpsLambda}
Let $S$ be a \CuSgp{} and let $\lambda\in F(S)$.
Set
\[
J := \big\{ x\in S \colon \lambda(x')<\infty \text{ for all } x'\ll x \big\}.
\]
Then $J$ is an ideal in $S$ and $\varepsilon(\lambda)=\lambda_J$.
\end{lma}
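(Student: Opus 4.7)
The plan is to split the argument into two halves: first establishing that $J$ is an ideal of $S$, and then computing $\varepsilon(\lambda)$ pointwise and matching it with $\lambda_J$.

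For the ideal structure, the downward-hereditary property is immediate: if $y\leq x\in J$ and $y'\ll y$, then $y'\ll x$, so $\lambda(y')<\infty$. For closure under addition, given $x_1,x_2\in J$ and $z\ll x_1+x_2$, I would use \axiomO{2} to fix $\ll$-increasing sequences $(x_{i,n})_n$ with $\sup_n x_{i,n}=x_i$; by \axiomO{4} the sums $x_{1,n}+x_{2,n}$ increase to $x_1+x_2$, so $z\leq x_{1,n_0}+x_{2,n_0}$ for some $n_0$, whence $\lambda(z)\leq \lambda(x_{1,n_0})+\lambda(x_{2,n_0})<\infty$ because $x_{i,n_0}\ll x_i\in J$. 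Closure under suprema of increasing sequences is the standard interpolation trick: if $x=\sup_n x_n$ with $x_n\in J$ and $x'\ll x$, then \axiomO{2} yields $x''$ with $x'\ll x''\ll x$, and $x''\leq x_n$ for some $n$ gives $x'\ll x_n$, hence $\lambda(x')<\infty$.

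For the equality $\varepsilon(\lambda)=\lambda_J$, I would evaluate both sides at an arbitrary $y\in S$, using the general identity $\varepsilon(\lambda)+\lambda=\lambda$ noted before \autoref{prp:cpctConePseudoCanc} together with the definition of the topology on $F(S)$ governing the convergence $\tfrac{1}{n}\lambda\to\varepsilon(\lambda)$. If $y\notin J$, pick $y'\ll y$ with $\lambda(y')=\infty$; then $(\tfrac{1}{n}\lambda)(y')=\infty$ for every $n$, and the $\limsup$-inequality in the $F(S)$-topology gives $\infty=\limsup_n \tfrac{1}{n}\lambda(y')\leq\varepsilon(\lambda)(y)$, so $\varepsilon(\lambda)(y)=\infty=\lambda_J(y)$. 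If $y\in J$, use \axiomO{2} to write $y=\sup_n y_n$ with $y_n\ll y$; since $y_n\ll y\in J$ we have $\lambda(y_n)<\infty$, so the identity $\varepsilon(\lambda)(y_n)+\lambda(y_n)=\lambda(y_n)$ allows cancellation of the finite value and yields $\varepsilon(\lambda)(y_n)=0$. Because $\varepsilon(\lambda)\in F(S)$ preserves suprema of increasing sequences, $\varepsilon(\lambda)(y)=\sup_n\varepsilon(\lambda)(y_n)=0=\lambda_J(y)$.

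The main obstacle I anticipate is the subcase $y\in J$ with $\lambda(y)=\infty$: the identity $\varepsilon(\lambda)+\lambda=\lambda$ gives no direct information at $y$ itself, since cancellation fails there, and the naive ``pointwise limit'' of $\tfrac{1}{n}\lambda$ at $y$ would still be $\infty$. The remedy is precisely to pass to $\ll$-approximants $y_n$, which automatically carry finite $\lambda$-values by the very definition of $J$ in terms of the way-below relation, and then to exploit that $\varepsilon(\lambda)$ is a genuine functional on $S$ rather than merely the pointwise infimum of $(\tfrac{1}{n}\lambda)_n$. Once this subtlety is handled, everything else reduces to routine verifications using \axiomO{2}, \axiomO{4}, and the characterization of convergence in $F(S)$.
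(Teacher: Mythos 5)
Your proof is correct and follows essentially the same route as the paper: both directions of $\varepsilon(\lambda)=\lambda_J$ are handled via the $\limsup$ part of the convergence $\tfrac{1}{n}\lambda\to\varepsilon(\lambda)$ for the case $y\notin J$, and via $\ll$-approximants with finite $\lambda$-values plus supremum-preservation for $y\in J$ (the paper merely deduces $\varepsilon(\lambda)(y_n)=0$ from the $\liminf$ inequality rather than from cancellation in $\varepsilon(\lambda)+\lambda=\lambda$, an immaterial difference). Your explicit verification that $J$ is an ideal fills in what the paper dismisses as straightforward, and it is accurate.
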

\begin{proof}
It is straightforward to verify that $J$ is an ideal.

The sequence $(\tfrac{1}{n}\lambda)_n$ converges to $\varepsilon(\lambda)$ in $F(S)$.
By definition of the topology in $F(S)$, this means that for all $x',x\in S$ with $x'\ll x$, we have
\[
\limsup_n \tfrac{1}{n}\lambda(x')
\leq \varepsilon(\lambda)(x)
\leq \liminf_n \tfrac{1}{n}\lambda(x).
\]

To show that $\epsilon(\lambda)\geq \lambda_J$, let $x\in S$ satisfy $\epsilon(\lambda)(x)=0$.
We need to verify that $x\in J$.
Let $x'\ll x$.
If $\lambda(x')=\infty$, then $\limsup_n \tfrac{1}{n}\lambda(x') = \infty$, which contradicts
\[
\limsup_n \tfrac{1}{n}\lambda(x')
\leq \varepsilon(\lambda)(x)=0.
\] 
Thus, $\lambda(x')<\infty$.
Since this holds for all $x'\ll x$, we conclude that $x\in J$.

To show the converse inequality, let $x\in J$.
We need to verify that $\varepsilon(\lambda)(x)=0$.
Choose a $\ll$-increasing sequence $(x_n)_n$ with supremum $x$.
By assumption, we have $\lambda(x_n)<\infty$ for each $n$.
This implies that $\varepsilon(\lambda)(x_n)=0$.
Using that $\varepsilon(\lambda)$ preserves suprema of increasing sequences, we deduce that $\varepsilon(\lambda)(x)=0$.
\end{proof}

\begin{lma}
\label{prp:hatWedgeIdl}
Let $S$ be a \CuSgp{} satisfying \axiomO{5} and \axiomO{6}, let $J\subseteq S$ be an ideal of $S$,
and let $x\in S$.
Then
\[
(\widehat{x}\wedge h_J)(\lambda)
= \sup \big\{ \widehat{z}(\lambda) \colon z\in J, z\leq x\big\},
\]
for all $\lambda\in F(S)$.
\end{lma}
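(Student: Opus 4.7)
The plan is to prove the two inequalities of the claimed equality separately, using the Riesz--Kantorovich formula \eqref{prp:MdualInj:eqInfDual}, which applies here because $F(S)$ satisfies Riesz refinement by \autoref{prp:refinementFS}.

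For the inequality $\sup\{\widehat{z}(\lambda) : z \in J, z \leq x\} \leq (\widehat{x} \wedge h_J)(\lambda)$, I would consider an arbitrary decomposition $\lambda = \lambda_1 + \lambda_2$ in $F(S)$. If $h_J(\lambda_2) = \infty$ the bound is trivial; otherwise $h_J(\lambda_2) = 0$, meaning $\lambda_2 \leq \lambda_J$, which forces $\lambda_2(z) = 0$ for every $z \in J$ and hence $\lambda_1(z) = \lambda(z)$ on $J$. Then for any $z \in J$ with $z \leq x$, monotonicity gives $\lambda(z) = \lambda_1(z) \leq \lambda_1(x) + h_J(\lambda_2)$. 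Taking the supremum over $z$ and the infimum over decompositions yields the inequality.

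For the reverse inequality, the plan is to construct an explicit decomposition of $\lambda$. Define
\[
\mu(y) := \sup\{\lambda(z) : z \in J, z \leq y\}
\]
and verify that $\mu \in F(S)$. Monotonicity is immediate; the nontrivial direction of additivity, $\mu(y_1 + y_2) \leq \mu(y_1) + \mu(y_2)$, is where I would invoke \axiomO{6}: for $z \in J$ with $z \leq y_1 + y_2$ and $z' \ll z$, \axiomO{6} produces $z_1', z_2' \in S$ with $z' \leq z_1' + z_2'$, $z_i' \leq z$ and $z_i' \leq y_i$, and downward heredity of $J$ places $z_i'$ in $J$, yielding $\lambda(z') \leq \mu(y_1) + \mu(y_2)$ after passing to suprema. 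Preservation of suprema of $\ll$-increasing sequences is similar: if $z \in J$ with $z \leq \sup_n y_n$ and $z' \ll z$, then $z' \leq y_n$ for some $n$, and $z' \in J$ by downward heredity.

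Since $\mu \leq \lambda$ pointwise and the two agree on $J$, the functionals $\mu + \lambda_J$ and $\lambda + \lambda_J$ take the same value at every $y \in S$ (both equal $\lambda(y)$ on $J$ and both equal $\infty$ off $J$). Hence $\mu + \lambda_J = \lambda + \lambda_J$ in $F(S)$, and applying Riesz refinement (\autoref{prp:refinementFS}) produces $\eta_{ij} \in F(S)$ with $\mu = \eta_{11} + \eta_{12}$, $\lambda_J = \eta_{21} + \eta_{22}$, $\lambda = \eta_{11} + \eta_{21}$, and $\lambda_J = \eta_{12} + \eta_{22}$. Setting $\lambda_1 := \eta_{11}$ and $\lambda_2 := \eta_{21}$, the algebraic inequalities $\lambda_1 \leq \mu$ and $\lambda_2 \leq \lambda_J$ descend to pointwise bounds, giving $h_J(\lambda_2) = 0$ and $\lambda_1(x) \leq \mu(x)$. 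Feeding this decomposition into the Riesz--Kantorovich formula yields $(\widehat{x} \wedge h_J)(\lambda) \leq \lambda_1(x) + h_J(\lambda_2) = \lambda_1(x) \leq \mu(x) = \sup\{\lambda(z) : z \in J, z \leq x\}$. The main technical obstacle is verifying the functional axioms for $\mu$, which is where \axiomO{6} is essential; the rest is a formal consequence of Riesz refinement in $F(S)$.
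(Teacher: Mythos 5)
Your proof is correct and follows essentially the same route as the paper: the key steps in both are verifying (via \axiomO{6}) that $\mu(y)=\sup\{\lambda(z):z\in J,\ z\leq y\}$ defines a functional, and using Riesz refinement in $F(S)$ to convert the relation $\mu+\lambda_J=\lambda+\lambda_J$ into a decomposition $\lambda=\lambda_1+\lambda_2$ with $\lambda_1\leq\mu$ and $\lambda_2\leq\lambda_J$. The only cosmetic difference is that the paper packages this through the auxiliary functional $\lambda^{(J)}=\inf\{\nu : \lambda\leq\nu+\lambda_J\}$, whereas you produce the decomposition explicitly.
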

\begin{proof}
Let $\lambda\in F(S)$.
Recall that $F(S)$ is a complete lattice.
This allows us to define
\[
\lambda^{(J)} := \inf \big\{ \mu\in F(S) \colon \lambda\leq\mu+\lambda_J \big\}.
\]
We have $\lambda^{(J)}\leq\lambda$ and $\lambda^{(J)}+\lambda_J = \lambda+\lambda_J$.
The result will follow by combining the following two claims.

\textit{Claim~1}:
Given $y\in S$, we have
\begin{align*}
\lambda^{(J)}(y) = \sup \big\{ \lambda(z) \colon z\in J, z\leq y \big\}.
\end{align*}

To prove the claim, let $z\in J$ satisfy $z\leq y$.
Then $\lambda_J(z)=0$ and therefore
\[
\lambda(z)
=\lambda(z)+\lambda_J(z)
=\lambda^{(J)}(z)+\lambda_J(z)
=\lambda^{(J)}(z)
\leq\lambda^{(J)}(y),
\]
which shows inequality `$\geq$'. 
To prove the converse inequality, one shows that the function $\mu\colon S\to[0,\infty]$ defined by
\[
\mu(y):=\sup \big\{ \lambda(z) :z\in J, z\leq y\big\},
\]
for $y\in S$, is a functional on $S$ satisfying $\lambda\leq \mu + \lambda_J$.
By definition of $\lambda^{(J)}$ we obtain $\lambda^{(J)}\leq\mu$.
This proves the claim.

\textit{Claim~2}:
We have $(\widehat{x}\wedge h_J)(\lambda) = \lambda^{(J)}(x)$.
Indeed, using \eqref{prp:MdualInj:eqInfDual} at the first step and \autoref{prp:refinementFS} at the third step, we deduce
\begin{align*}
(\widehat{x}\wedge h_J)(\lambda)
&= \inf\big\{ \lambda_1(x)+h_J(\lambda_2) \colon \lambda= \lambda_1+\lambda_2 \big\} \\
&= \inf\big\{ \lambda_1(x) \colon \lambda=\lambda_1+\lambda_2, \lambda_2\leq\lambda_J \big\} \\
&= \inf\big\{ \mu(x) \colon \lambda\leq\mu+\lambda_J \big\} \\
&= \lambda^{(J)}(x),
\end{align*}
which proves the claim.
\end{proof}

Given a \CuSgp{} $S$ and $x\in S$, we let $\langle x\rangle$ denote the ideal generated by $x$.

\begin{lma}
\label{prp:hatInfIntoIdls}
Let $S$ be a \CuSgp{} satisfying \axiomO{5}, \axiomO{6} and \axiomO{7}, let $x,y\in S$, and let $\lambda\in F(S)$.
Then there exist $r,s\in S$ such that
\[
r\leq x,\quad
s\leq y,\quad
r,s\in \langle x\rangle \cap \langle y\rangle, \andSep
(\widehat{x}\wedge\widehat{y})(\lambda)
= (\widehat{r}\wedge\widehat{s})(\lambda).
\]
\end{lma}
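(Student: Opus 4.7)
The plan is to express $(\widehat{x}\wedge\widehat{y})(\lambda)$ as a directed supremum of values $(\widehat{r}\wedge\widehat{s})(\lambda)$ with $r,s\in\langle x\rangle\cap\langle y\rangle$, $r\leq x$, $s\leq y$, and then extract a single pair realising this supremum. The tools for this are \autoref{prp:hatWedgeIdl}, the directedness provided by \autoref{prp:infWithIdealDirected} (the sole place where \axiomO{7} enters), and the fact from \autoref{prp:coneLscDual} that infima commute with directed suprema in $F(S)'$.

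First I would observe that $\widehat{y}\leq h_{\langle y\rangle}$: if $\lambda\leq\lambda_{\langle y\rangle}$ then $\lambda(y)=0$, while otherwise $h_{\langle y\rangle}(\lambda)=\infty$. Therefore $\widehat{x}\wedge\widehat{y}=\widehat{y}\wedge(\widehat{x}\wedge h_{\langle y\rangle})$. By \autoref{prp:hatWedgeIdl} applied with $J=\langle y\rangle$, together with the fact that the set $D_y:=\{r\in S:r\in\langle y\rangle,\ r\leq x\}$ is upward directed by \autoref{prp:infWithIdealDirected}, we obtain $\widehat{x}\wedge h_{\langle y\rangle}=\sup_{r\in D_y}\widehat{r}$ in $F(S)'$, since directed suprema in $F(S)'$ are computed pointwise. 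An application of \eqref{prp:coneLscDual:eqInfDirectSup} then yields
\[
\widehat{x}\wedge\widehat{y}\,=\,\sup_{r\in D_y}\bigl(\widehat{r}\wedge\widehat{y}\bigr).
\]
Repeating the argument for each fixed $r\in D_y$ with the ideal $\langle x\rangle$ (using that $\widehat{r}\leq h_{\langle x\rangle}$ because $r\leq x$), and with the directed set $D_x:=\{s\in S:s\in\langle x\rangle,\ s\leq y\}$, gives $\widehat{r}\wedge\widehat{y}=\sup_{s\in D_x}(\widehat{r}\wedge\widehat{s})$. Observe that every $r\in D_y$ lies in $\langle x\rangle\cap\langle y\rangle$ (since $r\leq x$ puts $r$ in $\langle x\rangle$), and similarly every $s\in D_x$ lies in $\langle x\rangle\cap\langle y\rangle$, so the pairs entering the double supremum are of the desired form.

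Finally, to produce a single pair $(r,s)$, pick sequences $(r_n)_n$ in $D_y$ and $(s_n)_n$ in $D_x$ with $(\widehat{r_n}\wedge\widehat{s_n})(\lambda)$ converging to $(\widehat{x}\wedge\widehat{y})(\lambda)$ (diverging to $\infty$ if the latter is infinite); the directedness of $D_y$ and $D_x$ allows both sequences to be arranged to be increasing. By \axiomO{1}, set $r:=\sup_n r_n$ and $s:=\sup_n s_n$. Since ideals are closed under suprema of increasing sequences, $r,s\in\langle x\rangle\cap\langle y\rangle$, and clearly $r\leq x$, $s\leq y$. Writing $\widehat{r}=\sup_n\widehat{r_n}$ and $\widehat{s}=\sup_n\widehat{s_n}$, a double application of \eqref{prp:coneLscDual:eqInfDirectSup} together with a diagonal argument on the doubly indexed family $\{\widehat{r_n}\wedge\widehat{s_m}\}_{n,m}$ yields $\widehat{r}\wedge\widehat{s}=\sup_n(\widehat{r_n}\wedge\widehat{s_n})$; evaluating at $\lambda$ delivers the required equality. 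The main obstacle is exactly this last step, of turning a general directed supremum into one attained by a concrete pair of elements, and it is resolved by exploiting the sequential completeness built into the definition of a \CuSgp.
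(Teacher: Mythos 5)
Your argument is correct and is essentially the paper's own proof: both use $\widehat{y}\leq h_{\langle y\rangle}$ together with \autoref{prp:hatWedgeIdl}, the directedness from \autoref{prp:infWithIdealDirected}, and \eqref{prp:coneLscDual:eqInfDirectSup} to write $\widehat{x}\wedge\widehat{y}$ as a directed supremum of $\widehat{r}\wedge\widehat{s}$, and then extract increasing sequences whose suprema give the desired pair. (Your final diagonal step can be shortened: since $\widehat{r_n}\wedge\widehat{s_n}\leq\widehat{r}\wedge\widehat{s}\leq\widehat{x}\wedge\widehat{y}$, evaluating at $\lambda$ and taking $\sup_n$ already sandwiches $(\widehat{r}\wedge\widehat{s})(\lambda)$ between two copies of $(\widehat{x}\wedge\widehat{y})(\lambda)$.)
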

\begin{proof}
Set $D_x:= \{r : r\in\langle y\rangle, r\leq x\}$.
By \autoref{prp:infWithIdealDirected}, $D_x$ is upward directed.
By \autoref{prp:coneLscDual}, infima commute with directed suprema in $F(S)'$.
Using this at the last step, and using that $\widehat{y}\leq h_{\langle y\rangle}$ at the first step, and using \autoref{prp:hatWedgeIdl} at the third step, we obtain
\[
\widehat{x}\wedge\widehat{y}
= \widehat{x}\wedge\big(h_{\langle y\rangle}\wedge\widehat{y}\big)
= \big(\widehat{x}\wedge h_{\langle y\rangle}\big)\wedge\widehat{y}
= ( \sup_{r\in D_x} \widehat{r} ) \wedge \widehat{y}
= \sup_{r\in D_x} \big( \widehat{r}\wedge\widehat{y} \big).
\]
Similarly, the set $D_y:=\{s : s\leq y, s\in\langle x\rangle\}$ is upward directed.
We deduce
\[
\widehat{x}\wedge\widehat{y}
= \sup_{r\in D_x,s\in D_y} \big( \widehat{r}\wedge\widehat{s} \big).
\]
Choose sequences $(r_n)_n$ in $D_x$ and $(s_n)_n$ in $D_y$ such that
\[
(\widehat{x}\wedge\widehat{y})(\lambda)
= \sup_n \big( \widehat{r_n}\wedge\widehat{s_n} \big)(\lambda).
\]
Using that $D_x$ and $D_y$ are upward directed, we may assume that $(r_n)_n$ and $(s_n)_n$ are increasing.
Then $r:=\sup_n r_n$ and $s:=\sup_n s_n$ have the desired properties.
\end{proof}

\begin{lma}
Let $S$ be a \CuSgp{} satisfying \axiomO{6}, and let $x,y\in S$.
Then $\langle x\rangle \cap \langle y\rangle$ is the ideal generated by $\{z\in S : z\leq x,y\}$.
\end{lma}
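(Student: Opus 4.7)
The plan is to establish the two inclusions separately. Writing $T := \{z \in S : z \leq x, y\}$ and $I_T$ for the ideal of $S$ generated by $T$, the inclusion $I_T \subseteq \langle x\rangle \cap \langle y\rangle$ is immediate: every $z \in T$ lies in $\langle x\rangle$ and in $\langle y\rangle$ by downward heredity, and the intersection of two ideals is again an ideal.

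For the reverse inclusion, I would first record the standard characterization that $w \in \langle x\rangle$ if and only if every $w' \ll w$ satisfies $w' \leq Mx$ for some $M \in \NN$; the set so described is readily checked to be an ideal containing $x$, and is contained in any such ideal. Consequently, for $w \in \langle x\rangle \cap \langle y\rangle$ and $w' \ll w$, there exist $M, N$ with $w' \leq Mx$ and $w' \leq Ny$. It then suffices to prove the auxiliary claim that any $v \in S$ satisfying $v \leq Mx$ and $v \leq Ny$ belongs to $I_T$: granted this, every $w' \ll w$ is in $I_T$, so \axiomO{2} and closure of $I_T$ under suprema of increasing sequences force $w \in I_T$.

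The auxiliary claim I would prove by strong induction on $M + N$, the base case $v = 0$ being trivial. For the inductive step, fix $v' \ll v$; by symmetry we may assume $M \geq 1$. If $M \geq 2$, apply \axiomO{6} to $v' \ll v \leq x + (M-1)x$ to produce $p, q$ with $p \leq v, x$, $q \leq v, (M-1)x$ and $v' \leq p+q$. Then $p \leq x$ and $p \leq v \leq Ny$, so the inductive hypothesis with parameters $(1, N)$ gives $p \in I_T$, and similarly $(M-1, N)$ places $q$ in $I_T$. If $M = 1$ and $N \geq 1$, apply \axiomO{6} to $v' \ll v \leq y + (N-1)y$: one piece is $\leq x$ and $\leq y$, hence lies in $T \subseteq I_T$, while the other is handled by the hypothesis with parameters $(1, N-1)$. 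In either case $v' \leq p+q \in I_T$, so $v' \in I_T$. Writing $v$ as the supremum of a $\ll$-increasing sequence via \axiomO{2}, closure of $I_T$ under such suprema yields $v \in I_T$.

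The principal obstacle is that \axiomO{6} only delivers a decomposition when fed a $\ll$-relation, so a direct attempt to write $v$ itself as a sum of elements of $T$ stalls. The remedy, which drives the whole argument, is to work with approximants $v' \ll v$: decompose via \axiomO{6}, place the summands in $I_T$ inductively, and then upgrade from $v' \in I_T$ for all $v' \ll v$ to $v \in I_T$ using closure of $I_T$ under increasing sequential suprema.
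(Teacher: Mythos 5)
Your proof is correct and follows essentially the same route as the paper's: both use \axiomO{6} to decompose approximants $v'\ll v$ of an element of $\langle x\rangle\cap\langle y\rangle$ into pieces dominated by both $x$ and $y$, and then upgrade from the approximants to $v$ itself via closure of the ideal under suprema of increasing sequences. The only difference is organizational: the paper performs the full $n$-fold splitting against $nx$ and then against $ny$ in two stages (using an iterated form of \axiomO{6}), whereas you peel off one summand at a time by strong induction on $M+N$, which amounts to the same decomposition.
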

\begin{proof}
Let $z\in \langle x\rangle \cap \langle y\rangle$ and $z'\ll z$.
Then $z'\leq nx$ and $z'\leq ny$ for some $n\in \NN$.
Let $z''\ll z'$.
By \axiomO{6} used in $z''\ll z'\leq nx$, there exist $x_1,\ldots,x_n$ such that $z''\ll \sum_{k=1}^n x_k$ and $x_k\leq z', x$ for all $k$.
Choose for each $k$ an element $x_k'\ll x_k$ such that $z''\ll \sum_{k=1}^n x_k'$.

For each $k$, applying \axiomO{6} again in $x_k'\ll x_k\leq ny$, we obtain $y_{k,1},\ldots,y_{k,n}$ such that $x_k'\ll \sum_{l=1}^n y_{k,l}$ and $y_{k,l}\leq x_k,y$ for all $l$.
It follows that $y_{k,l}$ belongs to the set $\{z \colon z\leq x,y\}$ for all $k$ and $l$.
Hence, $z''$ belongs to the ideal of $S$ generated by this set.
Since $z''$ and $z'$ can be chosen arbitrarily such that $z''\ll z'\ll z$, we deduce that $z$ belongs to this ideal as well. 
\end{proof}


\begin{proof}[Proof of \autoref{prp:EC-reduction}]
Let $x,y\in S$ and $\lambda\in F(S)$.
The inequality `$\geq$' in \eqref{dfn:equivEdwards} is clear (see also \eqref{dfn:Edwards:eq}).
We prove the opposite inequality, that is,
\[
(\widehat{x}\wedge\widehat{y})(\lambda)
\leq \sup \big\{\lambda(z) : z\leq x,y \big\}.
\]

If the right hand side is $\infty$ we are done.
Let us thus assume that $x$, $y$, and $\lambda$ are such that if $z\leq x,y$ then $\lambda(z)$ is finite.
Let $J\subseteq S$ be the ideal as in \autoref{prp:IdealEpsLambda} such that $\epsilon(\lambda)=\lambda_J$. Namely, $J=\{x\in S\colon \lambda(x')<\infty \text{ for all }x'\ll x\}$.
Then $z\in J$ whenever $z\leq x,y$.
Thus, by the previous lemma, we have $\langle x\rangle\cap \langle y\rangle \subseteq J$. 

Use \autoref{prp:hatInfIntoIdls} to obtain $r,s\in S$ such that
\[
r\leq x,\quad
s\leq y,\quad
r,s\in \langle x\rangle \cap \langle y\rangle\subseteq J,\andSep
(\widehat{x}\wedge\widehat{y})(\lambda)
= (\widehat{r}\wedge\widehat{s})(\lambda).
\]
Choose $\ll$-increasing sequences $(r_n)_n$ and $(s_n)_n$ with suprema $r$ and $s$, respectively.
Since $r,s\in J$, we have that $\lambda(r_n)<\infty$ and $\lambda(s_n)<\infty$ for each $n$.
By assumption, we can choose $z_n$ such that
\[
(\widehat{r_n}\wedge\widehat{s_n})(\lambda)-\tfrac{1}{n}\leq\lambda(z_n), \andSep 
z_n\leq r_n,s_n.
\]
Using \autoref{prp:coneLscDual} at the second step, we deduce
\[
(\widehat{x}\wedge\widehat{y})(\lambda)
= (\widehat{r}\wedge\widehat{s})(\lambda)
= \sup_n (\widehat{r_n}\wedge\widehat{s_n})(\lambda)
= \sup_n \lambda(z_n)
\leq \sup\big\{ \lambda(z) : z\leq x,y\big\},
\]
as desired.
\end{proof}


\section{Edwards' condition for Cuntz semigroups of C*-algebras}
\label{sec:ca}

In this section we prove the main result of this paper, namely that Cuntz semigroups of C*-algebras satisfy Edwards' condition.
To this end, we first recall necessary results and constructions from \cite{BlaHan82DimFct} and \cite{Haa14Quasitraces}. 

\begin{pgr}
\label{pgr:AW*}
Let $A$ be a C*-algebra, and let $\tau\colon A\to\CC$ be a bounded $2$-quasitrace on $A$. Denote by $\ell^\infty(A)$ the \ca{} of norm-bounded sequences in $A$.  

Given a free ultrafilter $\mathcal U$ on $\NN$, let $J_\tau \subseteq \ell^\infty(A)$ be defined as
\[
J_\tau = \big\{ (a_n)_n\in \ell^\infty(A) : \lim_{\mathcal U}\tau(a_n^*a_n)=0 \big\}.
\]
Then $J_\tau$ is a closed, two-sided ideal and $M_\tau:=\ell^\infty(A)/J_\tau$ is an \AW-algebra. Moreover, there exists a bounded $2$-quasitrace $\bar \tau\colon M_\tau\to \CC$ such that 
\[
\bar\tau\big( \pi((a_n)_n) \big)=\lim_{\mathcal U} \tau(a_n), 
\]
where $\pi\colon \ell^\infty(A)\to M_\tau$ denotes the quotient map.
(See \cite[Proposition~4.2]{Haa14Quasitraces} and \cite[I.4 and II.2]{BlaHan82DimFct}.)

The $2$-quasitrace $\tau$ extends to a lower semicontinuous $2$-quasitrace on $A\otimes\KK$.
As in \autoref{coneFS}, we denote by $d_\tau\in F(\Cu(A))$ the functional associated to $\tau$. Recall that $d_\tau([a]):=\lim_n \tau(a^{1/n})$ for all $a\in (A\otimes\mathcal K)_+$.
Since this is independent of the class $[a]$ of $a$, we may also write $d_\tau(a)$ in place of $d_\tau([a])$. 
Recall also that the assignment $\tau\mapsto d_\tau$ allows us to identify the cone of lower semicontinuous 2-quasitraces $\QT(A)$ with $F(\Cu(A))$;
see \cite{EllRobSan11Cone}.

Now, for $a\in A_+$, we set $p_a:=\pi((a^{1/n})_n)\in  M_\tau$. Then $p_a$ is a projection in $M_\tau$ such that $\bar{\tau}(p_a)=d_\tau(a)$.
\end{pgr}


\begin{lma}
\label{prp:infHatViaAW}
Let $A$ be a C*-algebra, let $\tau$ be a bounded 2-quasitrace on $A$, and let $a,b\in A_+$.
Then
\begin{align}
\label{prp:infHatViaAW:eqStatement}
(\widehat{[a]}\wedge\widehat{[b]})(d_\tau)
= \max\big\{ \bar{\tau}(q) \colon q\in M_\tau\text{ is a projection such that } q\precsim p_a,p_b \big\}.
\end{align}
\end{lma}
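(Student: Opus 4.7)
The plan is to prove the two inequalities in~\eqref{prp:infHatViaAW:eqStatement} separately, in each case working through the identification $F(\Cu(A)) \cong \QT(A)$ of~\cite{EllRobSan11Cone} and exploiting that $M_\tau$ is a finite AW*-algebra carrying the faithful bounded $2$-quasitrace $\bar\tau$.

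For the inequality $\bar\tau(q) \le (\widehat{[a]}\wedge\widehat{[b]})(d_\tau)$ for any projection $q\in M_\tau$ with $q\precsim p_a,p_b$, I would use the Riesz--Kantorovich formula \eqref{prp:FSLscDual:eqInf} to reduce to showing $\bar\tau(q) \le \lambda_1([a]) + \lambda_2([b])$ for every decomposition $d_\tau = \lambda_1+\lambda_2$ in $F(\Cu(A))$. Such a decomposition corresponds to $\tau = \tau_1+\tau_2$ with bounded $2$-quasitraces $\tau_i$ on $A$. Since $J_\tau \subseteq J_{\tau_i}$, each $\tau_i$ descends to a bounded $2$-quasitrace $\bar\tau_i$ on $M_\tau$ with $\bar\tau_1+\bar\tau_2=\bar\tau$ and $\bar\tau_i(p_a) = d_{\tau_i}(a) = \lambda_i([a])$ (and analogously for $b$). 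Monotonicity of $\bar\tau_i$ under $\precsim$ then gives $\bar\tau_1(q)\le\lambda_1([a])$ and $\bar\tau_2(q)\le\lambda_2([b])$, whose sum produces the desired inequality.

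For the reverse inequality together with attainment of the maximum, I would invoke the generalized comparability theorem (Kaplansky) in the finite AW*-algebra $M_\tau$: there exists a central projection $z\in Z(M_\tau)$ such that $zp_a\precsim zp_b$ in $zM_\tau z$ and $(1-z)p_b \precsim (1-z)p_a$ in $(1-z)M_\tau(1-z)$. Choose a projection $q_2\le (1-z)p_a$ with $q_2\sim (1-z)p_b$, and set $q := zp_a+q_2$. Orthogonality of the summands (they live in complementary central corners) makes $q$ a projection with $q\le p_a$ and $q\precsim p_b$ (combining the partial isometry witnessing $zp_a\precsim zp_b$ with the one witnessing $q_2\sim (1-z)p_b$), and
\[
\bar\tau(q) = \bar\tau(zp_a) + \bar\tau((1-z)p_b).
\]

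To identify $\bar\tau(q)$ with $(\widehat{[a]}\wedge\widehat{[b]})(d_\tau)$, I would define bounded $2$-quasitraces on $A$ by $\tau^z(x):=\bar\tau(z\iota(x))$ and $\tau^{1-z}(x):=\bar\tau((1-z)\iota(x))$, where $\iota\colon A\to M_\tau$ sends $x$ to the class of the constant sequence. Centrality of $z$ makes these $2$-quasitraces and forces $\tau^z+\tau^{1-z}=\tau$. Passing to the corresponding functionals $\lambda_1:=d_{\tau^z},\lambda_2:=d_{\tau^{1-z}}\in F(\Cu(A))$, the identity $\bar\tau(zp_a)=d_{\tau^z}(a)$ (proved via $\lim_n\bar\tau(z\iota(a)^{1/n}) = \bar\tau(zp_a)$) gives $\lambda_1([a])+\lambda_2([b]) = \bar\tau(q)$ with $\lambda_1+\lambda_2=d_\tau$, so this decomposition realizes the Riesz--Kantorovich infimum and combines with the first inequality to force equality. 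Thus $q$ attains the maximum. The main technical obstacles I anticipate are justifying that the decomposition $\tau=\tau_1+\tau_2$ passes to $M_\tau$ (i.e., that each $\tau_i$ vanishes on $J_\tau$ and extends to a bounded $2$-quasitrace $\bar\tau_i$) and the clean identification $\bar\tau(zp_a)=d_{\tau^z}(a)$; both should be reducible to the frameworks of~\cite{BlaHan82DimFct} and~\cite{Haa14Quasitraces}.
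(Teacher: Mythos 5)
Your proposal is correct and follows essentially the same route as the paper: the Riesz--Kantorovich formula plus the descent of a decomposition $\tau=\tau_1+\tau_2$ to $M_\tau$ for the inequality ``$\geq$'', and generalized comparability with the central cut-down quasitraces $\bar\tau(z\,\cdot)$ and $\bar\tau((1-z)\,\cdot)$ for the reverse inequality and attainment. The only (cosmetic) difference is that the paper works directly with $r=zp_a+(1-z)p_b$, which satisfies $r\precsim p_a,p_b$ but need not lie under $p_a$, whereas you replace $(1-z)p_b$ by an equivalent subprojection of $(1-z)p_a$; both choices carry the same value of $\bar\tau$.
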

\begin{proof}
We identify $F(\Cu(A))$ with $\QT(A)$ as explained above.
Set $\lambda:=d_\tau$.
It follows from \autoref{prp:FSLscDual} that 
\[
(\widehat{[a]}\wedge\widehat{[b]})(\lambda)
= \inf \big\{ \lambda_1([a]) + \lambda_2([b]) : \lambda=\lambda_1+\lambda_2 \big\}.
\]

Let $\lambda_1,\lambda_2\in F(\Cu(A))$ satisfy $\lambda=\lambda_1+\lambda_2$.
Let $\tau_1,\tau_2\in\QT(A)$ be such that $\lambda_1=d_{\tau_1}$ and $\lambda_2=d_{\tau_2}$
Then $\tau=\tau_1+\tau_2$.
It follows that $\tau_1$ and $\tau_2$ are bounded $2$-quasitraces that induce bounded $2$-quasitraces $\bar{\tau}_1$ and $\bar{\tau}_2$ on $M_\tau$ such that $\bar{\tau}=\bar{\tau}_1+\bar{\tau}_2$.

Let $q\in M_\tau$ be a projection satisfying $q\precsim p_a,p_b$.
(For projections, Cuntz subequivalence as recalled at the beginning of \autoref{sec:Cu} agrees with Murray-von Neumann subequivalence.)
Then
\[
\lambda_1([a]) + \lambda_2([b])
= d_{\tau_1}(a)+d_{\tau_2}(b)
= \bar{\tau}_1(p_a)+\bar{\tau}_2(p_b)
\geq \bar{\tau}_1(q)+\bar{\tau}_2(q)
= \bar{\tau}(q).
\]
Passing to the infimum over all decompositions $\lambda=\lambda_1+\lambda_2$ and the supremum over all such projections $q$, we obtain the inequality~`$\geq$' in~\eqref{prp:infHatViaAW:eqStatement}.

Let us show the converse. By \cite[Corollary~14.1, p.80]{Ber72BearRgs}, \AW-algebras have \emph{generalized comparability}, that is, given two projections $e,f$ there exists a central projection $z$ such that $ze\precsim zf$ and $(1-z)e\succsim(1-z)f$.  Applied to $p_a,p_b\in M_\tau$, we obtain a central projection $z\in M_\tau$ such that $zp_a\precsim zp_b$ and $(1-z)p_a\succsim(1-z)p_b$.
Set
\[
r:= zp_a + (1-z)p_b.
\]
Then $r$ is a projection satisfying $r\precsim p_a,p_b$.
Given a projection $r'\in M_\tau$ with $r'\precsim p_a,p_b$ let us verify $r'\precsim r$.
Indeed, $r'\precsim p_a$ implies $zr'\precsim zp_a$ and similarly we obtain $(1-z)r'\precsim(1-z)p_b$.
Then
\[
r'
= zr' + (1-z)r'
\precsim zp_a + (1-z)p_b = r.
\]
Thus, the right hand side in \eqref{prp:infHatViaAW:eqStatement} is equal to $\bar{\tau}(r)$.
Define $\bar\tau_1,\bar\tau_2\colon M_\tau\to\CC$ by  
\[
\bar{\tau}_1(y)=\bar{\tau}(zy)\andSep \bar{\tau}_2(y)=\bar{\tau}((1-z)y)
\] for all $y\in M_\tau$.

Now regard $A$ embedded in $\ell^\infty(A)$ as constant sequences, and let $\tau_1,\tau_2\colon A\to \CC$
be the induced 2-quasitraces on $A$, that is, 
\[
\tau_1(a)=\bar \tau_1(\pi(a)) \andSep  \tau_2(a) = \bar{\tau}_2(\pi(a))
\]
for all $a\in A$. Then $\tau_1,\tau_2\in\QT(A)$ and $\tau=\tau_1+\tau_2$. Thus, $\lambda=d_{\tau_1}+d_{\tau_2}$.  It follows that
\begin{align*}
(\widehat{[a]}\wedge\widehat{[b]})(\lambda)
&= \inf \big\{ \lambda_1([a]) + \lambda_2([b]) : \lambda=\lambda_1+\lambda_2 \text{ in } F(\Cu(A)) \big\} \\
&\leq d_{\tau_1}([a]) + d_{\tau_2}([b])
= \bar{\tau}_1(p_a) + \bar{\tau}_2(p_b) \\
&= \bar{\tau}(zp_a)+\bar{\tau}((1-z)p_b)
= \bar{\tau}(zp_a+(1-z)p_b)
= \bar{\tau}(r),
\end{align*}
which completes the proof.
\end{proof}

\begin{thm}
\label{prp:CuA-Edwards}
Let $A$ be a C*-algebra.
Then $\Cu(A)$ satisfies Edwards' condition.
\end{thm}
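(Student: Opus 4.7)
The plan is to invoke \autoref{prp:EC-reduction}: since $\Cu(A)$ satisfies \axiomO{5}, \axiomO{6}, and \axiomO{7} (by \autoref{prp:O7}), it suffices to show, under the identification $F(\Cu(A))\cong\QT(A)$ from \cite{EllRobSan11Cone}, that
\[
(\widehat{[a]}\wedge\widehat{[b]})(d_\tau)\leq\sup\{d_\tau(c):c\precsim a,b\}
\]
holds whenever $\tau\in\QT(A)$ and $a,b\in(A\otimes\KK)_+$ satisfy $d_\tau(a),d_\tau(b)<\infty$.

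First, I would reduce to a bounded quasitrace by cutting down. For $\epsilon>0$ set $a_\epsilon:=(a-\epsilon)_+$ and $b_\epsilon:=(b-\epsilon)_+$. Since $[a_\epsilon]\uparrow[a]$, $[b_\epsilon]\uparrow[b]$, and finite infima commute with directed suprema in $F(\Cu(A))'$ by \autoref{prp:coneLscDual},
\[
(\widehat{[a]}\wedge\widehat{[b]})(d_\tau)=\sup_{\epsilon>0}(\widehat{[a_\epsilon]}\wedge\widehat{[b_\epsilon]})(d_\tau),
\]
so it is enough, for each fixed $\epsilon>0$, to approximate the right-hand side from below by ranks $d_\tau(c)$ with $c\precsim a,b$. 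Setting $C:=\overline{(a_\epsilon+b_\epsilon)(A\otimes\KK)(a_\epsilon+b_\epsilon)}$—a $\sigma$-unital hereditary C*-subalgebra containing both $a_\epsilon$ and $b_\epsilon$—and $\tau_0:=\tau|_C$, we obtain that $\tau_0$ is bounded since $d_\tau(a_\epsilon+b_\epsilon)<\infty$. We may thus form the AW*-algebra $M_{\tau_0}$ as in \autoref{pgr:AW*}.

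Second, I would identify $(\widehat{[a_\epsilon]}\wedge\widehat{[b_\epsilon]})(d_\tau)$ with $\overline{\tau_0}(r)$, where $r:=zp_{a_\epsilon}+(1-z)p_{b_\epsilon}\in M_{\tau_0}$ is the projection produced by applying generalized comparability to $p_{a_\epsilon}$ and $p_{b_\epsilon}$ via a central projection $z\in M_{\tau_0}$. The inequality $(\widehat{[a_\epsilon]}\wedge\widehat{[b_\epsilon]})(d_\tau)\geq\overline{\tau_0}(r)$ follows as in the proof of \autoref{prp:infHatViaAW}: for any decomposition $\tau=\tau_1+\tau_2$ in $\QT(A)$, the restrictions $\tau_i|_C$ are bounded and extend to $M_{\tau_0}$ with $\overline{\tau_i|_C}(p_{a_\epsilon})=d_{\tau_i}(a_\epsilon)$, after which AW*-monotonicity yields the estimate. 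For the converse, one lifts a decomposition of $\tau_0$ in $\QT_b(C)$ that attains the infimum on $\Cu(C)$ to a decomposition of $\tau$ in $\QT(A)$ agreeing with it on $[a_\epsilon]$ and $[b_\epsilon]$; this lifting is standard given that $\Cu(C)$ is the ideal of $\Cu(A)$ generated by $[a_\epsilon+b_\epsilon]$, together with the Riesz refinement of $F(\Cu(A))$ from \autoref{prp:refinementFS}.

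The final and principal step is to descend from the projection $r\in M_{\tau_0}$ to a genuine positive element: given $\delta>0$, produce $c\in(A\otimes\KK)_+$ with $c\precsim a,b$ and $d_\tau(c)\geq\overline{\tau_0}(r)-\delta$. Realize the subequivalences $r\precsim p_{a_\epsilon}$ and $r\precsim p_{b_\epsilon}$ by partial isometries $v,w\in M_{\tau_0}$ with $v^*v=w^*w=r$, $vv^*\leq p_{a_\epsilon}$, and $ww^*\leq p_{b_\epsilon}$; lift them to bounded sequences $(v_n),(w_n)$ in $\ell^\infty(C)$, and combine them along the ultrafilter defining $M_{\tau_0}$ to extract a positive element $c\in C\subseteq A\otimes\KK$ that is Cuntz-below suitable cutoffs of both $a_\epsilon$ and $b_\epsilon$—and thus below $a$ and $b$—while $d_\tau(c)$ approximates $\overline{\tau_0}(r)$. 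This lifting, which follows and extends the strategy of \cite{Thi17arX:RksOps}, is the main technical obstacle: converting Murray--von Neumann subequivalences of projections in an AW*-algebra into genuine Cuntz subequivalences of positive elements in $C$, while keeping the rank under control along the ultrafilter, demands careful perturbation of the lifts and delicate $(a_\epsilon-\delta')_+$ cutoff estimates.
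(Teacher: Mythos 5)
Your framework is the right one and matches the paper's: reduce via \autoref{prp:EC-reduction} to the case of finite values, pass to a hereditary subalgebra on which $\tau$ is bounded, form the \AW-algebra $M_\tau$, and use generalized comparability to produce the largest projection $q$ dominated by $p_a$ and $p_b$, with $(\widehat{[a]}\wedge\widehat{[b]})(d_\tau)=\bar{\tau}(q)$ (this is exactly \autoref{prp:infHatViaAW}, which you partially re-derive rather than cite). Your $\epsilon$-cutoffs are unnecessary: since $d_\tau(a),d_\tau(b)<\infty$, the restriction of $\tau$ to $\overline{hAh}$ with $h=a+b$ is already bounded with norm $d_\tau(h)$, which is how the paper proceeds.

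However, the final step --- which you yourself label ``the main technical obstacle'' --- is precisely the content of the theorem, and you do not supply an argument for it; announcing that it ``demands careful perturbation of the lifts and delicate cutoff estimates'' is a statement of the problem, not a solution, and it is not clear that the two-partial-isometry scheme you sketch closes. The paper resolves this with one short, specific device that is missing from your proposal: take a single $v\in M_\tau$ with $vv^*=q\leq p_a$ and $v^*v\leq p_b$, lift it to a contractive sequence $(v_n)_n$ in $\ell^\infty(B)$, and set $w_n:=a^{1/n}v_nb^{1/n}$. Then $c:=w_nw_n^*$ satisfies $c\precsim a$ and $c\sim w_n^*w_n\precsim b$ \emph{automatically} --- no perturbation or cutoff estimates are needed --- while $\pi((w_n)_n)=p_avp_b=v$, so that $\lim_{\mathcal U}\tau(w_nw_n^*)=\bar{\tau}(vv^*)=\bar{\tau}(q)$; hence for any $t<\bar{\tau}(q)$ some single index $n$ already gives $t<\tau(w_nw_n^*)\leq d_\tau(w_nw_n^*)$. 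Without this sandwiching trick (or an equivalent mechanism converting the Murray--von Neumann subequivalence in $M_\tau$ into a genuine Cuntz subequivalence in $A$ with controlled rank), your argument is incomplete at its decisive point.
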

\begin{proof}
First, we may assume that $A$ is stable.
Recall that $\Cu(A)$ is a \CuSgp{} satisfying \axiomO{5} and \axiomO{6}.
By \autoref{prp:O7} it also satisfies \axiomO{7}.
Hence by \autoref{prp:EC-reduction}, it is enough to show that
\[
(\widehat{[a]}\wedge\widehat{[b]})(\lambda)
\leq \sup \big\{ \lambda([c]) \colon [c]\leq [a],[b] \big\}
\]
for all $\lambda\in F(\Cu(A))$ and $[a],[b]\in \Cu(A)$ with $\lambda([a]),\lambda([b])<\infty$.
We continue to identify $F(\Cu(A))$ with $\QT(A)$, and therefore we consider $\tau\in \QT(A)$ and $a,b\in A_+$ with $d_\tau(a),d_\tau(b)<\infty$.

Let $h=a+b$.
Observe that $a,b\in\overline{hAh}$ and $d_\tau(h)<\infty$.
Set $B:=\overline{hAh}$.
The restriction of $\tau$ to $B$ is a bounded $2$-quasitrace with norm $d_\tau(h)$.
Choose a free ultrafilter $\mathcal U$ on $\NN$ and consider the \AW-algebra $M_\tau$, with bounded 2-quasitrace  $\bar{\tau}$, associated to the pair $(B,\tau)$ as described in Paragraph \ref{pgr:AW*}. Set $p_a=\pi((a^{1/n})_n)$ and $p_b=\pi((b^{1/n})_n)$ where $\pi$ is the quotient map.

Apply \autoref{prp:infHatViaAW} to obtain a projection $q\in M_\tau$ satisfying
\[
(\widehat{[a]}\wedge\widehat{[b]})(\lambda)=\bar{\tau}(q) \andSep
q\precsim p_a,p_b.
\]
We may assume that $q\leq p_a$. Choose $v\in M_\tau$ with $q=vv^*$ and $v^*v\leq p_b$.
Lift $v$ to a contractive element $\bar{v}=(v_n)_n$ in $\ell^\infty(B)$. For each $n$, set $w_n:=a^{1/n}v_nb^{1/n}$. Set $w:=(w_n)_n$. Then $w\in\ell^\infty(B)$ and $\pi(w)=p_avp_b=v$.

Let $t<(\widehat{a}\wedge\widehat{b})(\lambda)$. Since
\[
(\widehat{[a]}\wedge\widehat{[b]})(\lambda)=\bar{\tau}(q)=\lim_{\mathcal U}\tau(w_nw_n^*),
\]
there exists $n\in \NN$ such that $t<\tau(w_nw_n^*) \leq d_\tau(w_nw_n^*)$. Set $c:=w_nw_n^*$ which by construction satisfies $c\precsim a,b$.
Therefore $t\leq d_\tau(c)$ and the result follows. 
\end{proof}



\begin{thebibliography}{GHK{\etalchar{+}}03}

\bibitem[APRT18]{AntPerRobThi18arX:CuntzSR1}
\bgroup\scshape{}R.~Antoine\egroup{}, \bgroup\scshape{}F.~Perera\egroup{},
  \bgroup\scshape{}L.~Robert\egroup{}, and \bgroup\scshape{}H.~Thiel\egroup{},
  \ca{s} of stable rank one and their {C}untz semigroups, preprint
  (arXiv:1809.03984 [math.OA]), 2018.

\bibitem[APS11]{AntPerSan11:Pullbacks}
\bgroup\scshape{}R.~Antoine\egroup{}, \bgroup\scshape{}F.~Perera\egroup{}, and \bgroup\scshape{}L.~Santiago\egroup{}, Pullbacks, $C(X)$-algebras, and their {C}untz semigroup,  \emph{J.\ Funct.\ Anal.}
  \textbf{260} (2011), 2844--2880. \mr{2774057}.

\bibitem[APT18]{AntPerThi18TensorProdCu}
\bgroup\scshape{}R.~Antoine\egroup{}, \bgroup\scshape{}F.~Perera\egroup{}, and
  \bgroup\scshape{}H.~Thiel\egroup{}, Tensor products and regularity properties
  of {C}untz semigroups,  \emph{Mem. Amer. Math. Soc.} \textbf{251} (2018),
  viii+191. \mr{3756921}.

\bibitem[APT11]{AraPerTom11Cu}
\bgroup\scshape{}P.~Ara\egroup{}, \bgroup\scshape{}F.~Perera\egroup{}, and
  \bgroup\scshape{}A.~S. Toms\egroup{}, {$K$}-theory for operator algebras.
  {C}lassification of \ca{s},  in \emph{Aspects of operator algebras and
  applications}, \emph{Contemp. Math.} \textbf{534}, Amer. Math. Soc.,
  Providence, RI, 2011, pp.~1--71. \mr{2767222}.  \zbl{1219.46053}.

\bibitem[Ber72]{Ber72BearRgs}
\bgroup\scshape{}S.~K. Berberian\egroup{}, \emph{Baer *-rings},
  Springer-Verlag, New York-Berlin, 1972, Die Grundlehren der mathematischen
  Wissenschaften, Band 195. \mr{0429975}.  \zbl{0242.16008}.

\bibitem[BH82]{BlaHan82DimFct}
\bgroup\scshape{}B.~Blackadar\egroup{} and
  \bgroup\scshape{}D.~Handelman\egroup{}, Dimension functions and traces on
  \ca{s},  \emph{J. Funct. Anal.} \textbf{45} (1982), 297--340. \mr{650185}.
  \zbl{0513.46047}.

\bibitem[CRS10]{CiuRobSan10CuIdealsQuot}
\bgroup\scshape{}A.~Ciuperca\egroup{}, \bgroup\scshape{}L.~Robert\egroup{}, and
  \bgroup\scshape{}L.~Santiago\egroup{}, The {C}untz semigroup of ideals and
  quotients and a generalized {K}asparov stabilization theorem,  \emph{J.
  Operator Theory} \textbf{64} (2010), 155--169. \mr{2669433}.
  \zbl{1212.46084}.

\bibitem[CEI08]{CowEllIva08CuInv}
\bgroup\scshape{}K.~T. Coward\egroup{}, \bgroup\scshape{}G.~A.
  Elliott\egroup{}, and \bgroup\scshape{}C.~Ivanescu\egroup{}, The {C}untz
  semigroup as an invariant for \ca{s},  \emph{J.\ Reine Angew.\ Math.}
  \textbf{623} (2008), 161--193. \mr{2458043}.  \zbl{1161.46029}.

\bibitem[Cun78]{Cun78DimFct}
\bgroup\scshape{}J.~Cuntz\egroup{}, Dimension functions on simple \ca{s},
  \emph{Math. Ann.} \textbf{233} (1978), 145--153. \mr{0467332}.
  \zbl{0354.46043}.

\bibitem[DT10]{DadTom10Ranks}
\bgroup\scshape{}M.~Dadarlat\egroup{} and \bgroup\scshape{}A.~S. Toms\egroup{},
  Ranks of operators in simple \ca{s},  \emph{J. Funct. Anal.} \textbf{259}
  (2010), 1209--1229. \mr{2652186}.  \zbl{1202.46061}.

\bibitem[Edw69]{Edw69UniformApproxAff}
\bgroup\scshape{}D.~A. Edwards\egroup{}, On uniform approximation of affine
  functions on a compact convex set,  \emph{Quart. J. Math. Oxford Ser. (2)}
  \textbf{20} (1969), 139--142. \mr{0250044}.  \zbl{0177.16303}.

\bibitem[ERS11]{EllRobSan11Cone}
\bgroup\scshape{}G.~A. Elliott\egroup{}, \bgroup\scshape{}L.~Robert\egroup{},
  and \bgroup\scshape{}L.~Santiago\egroup{}, The cone of lower semicontinuous
  traces on a \ca{},  \emph{Amer. J. Math.} \textbf{133} (2011), 969--1005.
  \mr{2823868}.  \zbl{1236.46052}.

\bibitem[GHK{\etalchar{+}}03]{GieHof+03Domains}
\bgroup\scshape{}G.~Gierz\egroup{}, \bgroup\scshape{}K.~H. Hofmann\egroup{},
  \bgroup\scshape{}K.~Keimel\egroup{}, \bgroup\scshape{}J.~D. Lawson\egroup{},
  \bgroup\scshape{}M.~Mislove\egroup{}, and \bgroup\scshape{}D.~S.
  Scott\egroup{}, \emph{Continuous lattices and domains}, \emph{Encyclopedia of
  Mathematics and its Applications} \textbf{93}, Cambridge University Press,
  Cambridge, 2003. \mr{1975381}.  \zbl{1088.06001}.

\bibitem[Goo86]{Goo86GpsInterpolation}
\bgroup\scshape{}K.~R. Goodearl\egroup{}, \emph{Partially ordered abelian
  groups with interpolation}, \emph{Mathematical Surveys and Monographs}
  \textbf{20}, American Mathematical Society, Providence, RI, 1986.
  \mr{845783}.  \zbl{0589.06008}.

\bibitem[Haa14]{Haa14Quasitraces}
\bgroup\scshape{}U.~Haagerup\egroup{}, Quasitraces on exact \ca{s} are traces,
  \emph{C. R. Math. Acad. Sci. Soc. R. Can.} \textbf{36} (2014), 67--92.
  \mr{3241179}.  \zbl{1325.46055}.

\bibitem[{Kei}17]{Kei17CuSgpDomainThy}
\bgroup\scshape{}K.~{Keimel}\egroup{}, The {C}untz semigroup and domain theory,
   \emph{Soft Comput.} \textbf{21} (2017), 2485--2502. \zbl{1391.46066}.

\bibitem[Nac65]{Nach65TopOrder}
\bgroup\scshape{}L.~Nachbin\egroup{}, \emph{Topology and order},
  \emph{Translated from the Portuguese by Lulu Bechtolsheim. Van Nostrand
  Mathematical Studies, No. 4}, D. Van Nostrand Co., Inc., Princeton,
  N.J.-Toronto, Ont.-London, 1965. \mr{0219042}.  \zbl{0131.37903}.

\bibitem[ORT11]{OrtRorThi11CuOpenProj}
\bgroup\scshape{}E.~Ortega\egroup{}, \bgroup\scshape{}M.~R{\o}rdam\egroup{},
  and \bgroup\scshape{}H.~Thiel\egroup{}, The {C}untz semigroup and comparison
  of open projections,  \emph{J. Funct. Anal.} \textbf{260} (2011), 3474--3493.
  \mr{2781968}.  \zbl{1222.46043}.

\bibitem[Phe01]{Phe01LNMChoquet}
\bgroup\scshape{}R.~R. Phelps\egroup{}, \emph{Lectures on {C}hoquet's theorem},
  second ed., \emph{Lecture Notes in Mathematics} \textbf{1757},
  Springer-Verlag, Berlin, 2001. \mr{1835574}.  \zbl{0997.46005}.

\bibitem[Rob13]{Rob13Cone}
\bgroup\scshape{}L.~Robert\egroup{}, The cone of functionals on the {C}untz
  semigroup,  \emph{Math. Scand.} \textbf{113} (2013), 161--186. \mr{3145179}.
  \zbl{1286.46061}.

\bibitem[RW10]{RorWin10ZRevisited}
\bgroup\scshape{}M.~R{\o}rdam\egroup{} and \bgroup\scshape{}W.~Winter\egroup{},
  The {J}iang-{S}u algebra revisited,  \emph{J. Reine Angew. Math.}
  \textbf{642} (2010), 129--155. \mr{2658184}.  \zbl{1209.46031}.

\bibitem[Sho90]{Sho90DualityCardAlg}
\bgroup\scshape{}R.~M. Shortt\egroup{}, Duality for cardinal algebras,
  \emph{Forum Math.} \textbf{2} (1990), 433--450. \mr{1067211}.
  \zbl{0717.28003}.

\bibitem[Thi17]{Thi17arX:RksOps}
\bgroup\scshape{}H.~Thiel\egroup{}, Ranks of operators in simple \ca{s} with
  stable rank one, Commun.\ Math.\ Phys.\ (to appear), preprint
  (arXiv:1711.04721 [math.OA]), 2017.

\bibitem[Weh92]{Weh92InjectivePOM1}
\bgroup\scshape{}F.~Wehrung\egroup{}, Injective positively ordered monoids.
  {I},  \emph{J. Pure Appl. Algebra} \textbf{83} (1992), 43--82. \mr{1190444}.
  \zbl{0790.06016}.

\end{thebibliography}

\providecommand{\etalchar}[1]{$^{#1}$}
\providecommand{\bysame}{\leavevmode\hbox to3em{\hrulefill}\thinspace}
\providecommand{\noopsort}[1]{}
\providecommand{\mr}[1]{\href{http://www.ams.org/mathscinet-getitem?mr=#1}{MR~#1}}
\providecommand{\zbl}[1]{\href{http://www.zentralblatt-math.org/zmath/en/search/?q=an:#1}{Zbl~#1}}
\providecommand{\jfm}[1]{\href{http://www.emis.de/cgi-bin/JFM-item?#1}{JFM~#1}}
\providecommand{\arxiv}[1]{\href{http://www.arxiv.org/abs/#1}{arXiv~#1}}
\providecommand{\doi}[1]{\url{http://dx.doi.org/#1}}
\providecommand{\MR}{\relax\ifhmode\unskip\space\fi MR }
\providecommand{\MRhref}[2]{%
  \href{http://www.ams.org/mathscinet-getitem?mr=#1}{#2}
}
\providecommand{\href}[2]{#2}

\end{document}